\newtheorem{remark}{Remark}
   \newtheorem{theorem}{Theorem}
      \newtheorem{proof}{Proof}
\newcommand{\td}{\tilde}
\newcommand{\mbb}{\mathbb}
\newcommand{\mcal}{\mathcal}
\newcommand{\parSp}{\mcal Z}		
\newcommand{\opn}{\operatorname}
\newcommand{\hsp}{\hspace{0.1cm}}
\newcommand{\hspB}{\hspace{0.3cm}}
\newcommand{\pd}{\partial}
\newcommand{\lan}{\left\langle}
\newcommand{\ran}{\right\rangle}
\newcommand{\ttrain}[1]{t_{#1}}
\newcommand{\tref}{t_{\opn{ref}}}
\newcommand{\snapMat}[3]{\mcal S_{#2,#3}({#1})}
\DeclareMathOperator*{\argmin}{arg\,min}
\newcommand{\sol}[1]{u(\cdot,{#1})}
\newcommand{\solFV}[1]{u_N(\cdot,{#1})}
\newcommand{\solROM}[2]{u_{#1}(\cdot,#2)}
\newcommand{\dev}[2]{\Psi(\cdot,#2)}
\tikzset{decorate sep/.style 2 args=
{decorate,decoration={shape backgrounds,shape=circle,shape size=#1,shape sep=#2}}}
\begin{document}
\title{Learning Reduced Order Models from Data for Hyperbolic PDEs}
  
\author[$\ast$]{Neeraj Sarna}
\affil[$\ast$]{Max Planck Institute for Dynamics of Complex Technical Systems, 39106 Magdeburg, Germany.
  \email{sarna@mpi-magdeburg.mpg.de}, \orcid{0000-0003-0607-2067}}
  
\author[$\dagger$]{Peter Benner}
\affil[$\dagger$]{Max Planck Institute for Dynamics of Complex Technical Systems, 39106 Magdeburg, Germany, and Faculty for Mathematics, Otto von Guericke University Magdeburg, 39106 Magdeburg, Germany.\authorcr
  \email{benner@mpi-magdeburg.mpg.de}, \orcid{0000-0003-3362-4103}}
  
\shorttitle{Learing Reduced order models for Hyperbolic PDEs}
\shortauthor{Neeraj Sarna and Peter Benner}
\shortdate{}
  
\keywords{Data-driven methods, Learning reduced order models, Hyperbolic PDEs, Decompose then learn}

\msc{}

\abstract{Given a set of solution snapshots of a hyperbolic PDE, we are interested in learning a reduced order model (ROM). To this end, we propose a novel decompose then learn approach. We decompose the solution by expressing it as a composition of a transformed solution and a de-transformer. Our idea is to learn a ROM for both these objects, which, unlike the solution, are well approximable in a linear reduced space. A ROM for the (untransformed) solution is then recovered via a recomposition. The transformed solution results from composing the solution with a spatial transform that aligns the spatial discontinuities. Furthermore, the de-transformer is the inverse of the spatial transform and lets us recover a ROM for the solution. We consider an image registration technique to compute the spatial transform, and to learn a ROM, we resort to the dynamic mode decomposition (DMD) methodology. Several benchmark problems demonstrate the effectiveness our method in representing the data and as a predictive tool.}

\novelty{
\begin{enumerate}
\item A decompose then learn approach for hyperbolic PDEs.
\item The solution is decomposed into a transformed solution and a de-transformer.
\item A ROM is learned for the transformed solution and the de-transformer.
\item A re-composition recovers a ROM for the (untransformed) solution.
\item Numerical experiments showcase improvements over a standard DMD approach.
\end{enumerate}
}
\maketitle

\section{Introduction}
Scientific computing often involves applications where a numerical solution is sought at several different time or parameter instances. For such applications, a high-fidelity (HF) finite-element (or volume or difference) type solver can be prohibitively expensive and one seeks to replace it with an accurate and efficient surrogate. The reduced order modelling paradigm offers a reliable architecture to construct such surrogates. It performs the onerous task of snapshot collection offline, thus allowing for online efficiency \cite{RBBook}. We are interested in developing ROMs that can well approximate solutions to time-dependent, possibly non-linear, hyperbolic PDEs. Such PDEs are ubiquitous in science and engineering applications involving wave-type phenomenon, with the Euler and the wave equation being two classic examples.

Standard reduced order modelling techniques approximate the solution in a linear reduced space, which is usually a span of reduced or POD type basis vectors \cite{JanStamm,PeterReview}. Although a linear reduced approximation is accurate for a broad range of applications, it is largely inadequate for hyperbolic PDEs. The primary reason being moving spatial discontinuities or steep gradients. Even for smooth initial and boundary data, solutions to such PDEs exhibit spatial discontinuities that often move in space and thus trigger temporal discontinuities. Temporal (or parametric) discontinuities (or steep gradients) have mainly two negative consequences for the approximability in a linear reduced space. Firstly, a large reduced space might be required to well approximate the solution, hindering the efficiency of a ROM \cite{PeterBook,Welper2017,WaveKolmogorov}. Secondly, temporal discontinuities trigger oscillations in reduced basis vectors, resulting in an oscillatory approximation \cite{GNAT,Nair,Welper2017,Fresca2020}. These inadequacies have motivated several authors to seek non-linear approximations. Broadly speaking, these approximations rely either on: (i) Wasserstein spaces \cite{Metric2019,RimOT}; (ii) online adaptivity of basis \cite{Benjamin2018model,NonLinearMOR}; (iii) auto-encoders \cite{KevinAuto,kim2020,Fresca2020}; (iv) domain decomposition \cite{RBHyp,Constantine}, or (v) manifold calibration/transformation \cite{Cagniart2019,MojganiLagrangian,Nair,Welper2017,RegisterMOR,Taddei2020ST}.

Running parallel to these previous works, we propose a novel ROM technique for hyperbolic PDEs. Our interest lies in a non-intrusive data-driven approach where one learns a ROM from a set of solution snapshots, without having access to the underlying discrete evolution operators---in the context of problems that do not exhibit transport dominated effects, works along these lines can be found in \cite{JanNN2018,Guo2019,Qian2020,Kramer2020,NairNonIntrusive2013,AerofoilNN2020}. Such a configuration is appealing for applications where one relies either on experiments, or huge legacy/commercial codes, for snapshot collection. Note that intrusive techniques compute a ROM by projecting a PDE onto the reduced space, which requires access to the underlying discrete evolution operator---related to our context, a few of the many noteworthy works involving the intrusive technique can be found in \cite{Rowley2004,GNAT,RegisterMOR,MATS,RB_FV,Taddei2020ST}. 

\subsection{Decompose then learn approach}
Since solutions to hyperbolic PDEs have poor approximability in a linear reduced space, following the standard approach of learning a ROM directly for the solution can be ineffective. Therefore, we propose to express the solution $u(\cdot,t)$ as a composition of two functions, which, unlike the solution, are sufficiently regular along the temporal domain and thus (with some additional assumptions \cite{WelperAdaptive,SarnaCalib2020}) exhibit good approximability in a linear reduced space. Our idea is to learn a ROM for these two functions, followed by a re-composition that approximates the solution. 

We decompose the solution as 
\begin{gather}
u(\cdot,t) = g(\td\varphi(\cdot,t),t), \label{decompose u}
\end{gather}
where 
\begin{gather}
g(\cdot,t) := u(\varphi(\cdot,t),t),\hspB\text{and}\hspB \td\varphi(\cdot,t) := \varphi(\cdot,t)^{-1}.
\end{gather}
We refer to $\varphi(\cdot,t):\Omega\to\Omega$ as the spatial transform. When composed with the solution $\sol{t}$, it results in a transformed solution $g(\cdot,t)$, which, at least ideally, has no temporal discontinuities \cite{Nair,Welper2017,RegisterMOR}.  We refer to $\td\varphi$ as the \textit{de-transformer} since it reverts solution transformation. Akin to the transformed solution, the de-transformer is also sufficiently regular in time; as noted in \cite{SarnaCalib2020}, this claim requires some assumptions that we brief later. Owing to the temporal regularity of the transformed solution and the de-transformer, we can expect them to be well approximable in a linear reduced space and thus, an accurate ROM can be learned for these two objects. A ROM for the solution is then recovered via the above relation. \Cref{fig: learn ROM} provides a high-level summary of our idea.

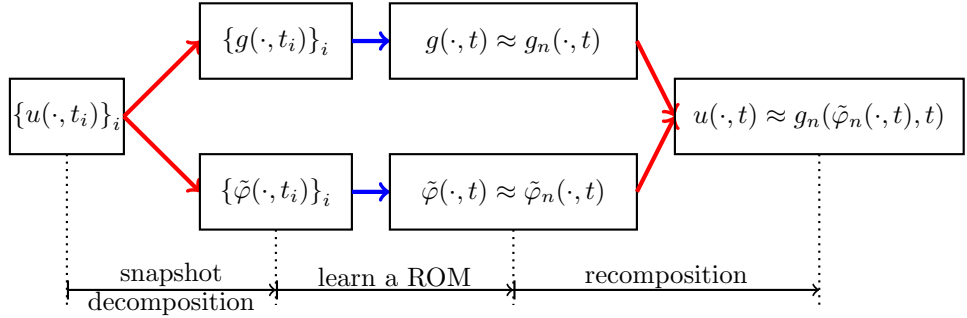
\begin{figure}[ht!]
\centering
\begin{tikzpicture}
\draw [draw=black,thick] (-2,1) rectangle ++(1.5,1);
\node[] at (-2 + 0.75,1 + 0.5) {$\left\{u(\cdot,t_i)\right\}_i$};
\draw[->,ultra thick,red] (-0.5,1.5)--(0.5,2.5);
\draw[->,ultra thick,red] (-0.5,1.5)--(0.5,0.5);
\draw [draw=black,thick] (0.5,2) rectangle ++(2,1);
\node[] at (0.5 + 1,2 + 0.5) {$\left\{g(\cdot,t_i)\right\}_i$};
\draw [draw=black,thick] (0.5,0) rectangle ++(2,1);
\node[] at (0.5 + 1,0 + 0.5) {$\left\{\td\varphi(\cdot,t_i)\right\}_i$};
\draw[->,ultra thick,blue] (2.5,2.5)--(3,2.5);
\draw[->,ultra thick,blue] (2.5,0.5)--(3,0.5);
\draw [draw=black,thick] (3,2) rectangle ++(3.25,1);
\node[] at (3 + 1.625,2 + 0.5) {$g(\cdot,t)\approx g_n(\cdot,t)$};
\draw[draw=black,thick] (3,0) rectangle ++(3.25,1);
\node[] at (3 + 1.625,0 + 0.5) {$\td\varphi(\cdot,t)\approx \td\varphi_n(\cdot,t)$};
\draw[->,ultra thick,red] (6.25,0.5)--(6.75,1.5);
\draw[->,ultra thick,red] (6.25,2.5)--(6.75,1.5);
\draw[draw=black,thick] (6.75,1) rectangle ++(3.8,1);
\node[] at (6.75 + 1.9,1 + 0.5) {$u(\cdot,t)\approx g_n(\td\varphi_n(\cdot,t),t)$};
\draw[dotted,thick] (-1.25,1)--(-1.25,-1);
\draw [draw=black,thick,dotted] (1.5,0)--(1.5,-1);
\draw[|->,thick] (-1.25,-0.8)--(1.5,-0.8);
\node[] at (0.125,-0.60) {snapshot};
\node[] at (0.125,-0.98) {decomposition};
\draw [draw=black,thick,dotted] (4.625,0)--(4.625,-1);
\draw[|->,thick] (1.5,-0.8)--(4.625,-0.8);
\node[] at (3.0625,-0.62) {learn a ROM};
\draw [draw=black,thick,dotted] (8.65,1)--(8.65,-1);
\draw[|->,thick] (4.625,-0.8)--(8.65,-0.8);
\node[] at (6.6375,-0.62) {recomposition};
\end{tikzpicture}
\caption{A high-level depiction of our idea. Snapshots are collected at the time-instances $\{t_i\}_{i=1,\dots,K}$. The standard approach directly learns a ROM for $\sol{t}$, without any prior decomposition. The functions $g_n$ and $\td\varphi_n$ represent ROMs for $g$ and $\td \varphi$, respectively. \label{fig: learn ROM}}	
\end{figure}

We find the following attributes desirable in a technique used to compute $\varphi$.
\begin{enumerate}
\item[(P1)] It should not require the discrete HF evolution operators.
\item[(P2)] It should not require an explicit form of the underlying hyperbolic PDE.
\item[(P3)] It should be independent of the technique used to learn the ROM.
\end{enumerate}
Violation of (P1) defeats the purpose to developing a non-intrusive technique. In a data-driven setting, an explicit form of the underlying PDE might be unavailable, thus (P2) is desirable. Note that the framework in \cite{Lu2020} does not satisfy (P2)---towards the end of this introduction, we elaborate further. 

Property (P3) has two crucial consequences. Firstly, it ensures that any of the different learning techniques---for instance, the operator inference approach \cite{Benjamin2016}, radial basis functions approach \cite{RBFXiao2015}, DMD approach \cite{DMDTheory}---can be used as a black-box to learn a ROM for $g$ and $\td\varphi$. Secondly, it detaches the decomposition and the re-composition step from any pre-existing numerical implementation of a ROM learning toolbox. Consequently, one does not need to maintain an additional implementation for PDEs that are well approximable via standard techniques. By simply choosing $\varphi(\cdot,t) = \opn{Id}$, where $\opn{Id}$ is the identity operator, and keeping the rest of the algorithm as such, one can recover a standard data-driven reduced order modelling technique. We resort to an optimization-based image registration technique to compute $\varphi$. Later sections make it obvious that this technique satisfies (P1)--(P3). 

We undertake the (plain) DMD approach to learn a ROM \cite{DMDKutz,DMDTheory,NonLinearFlows2009}. The DMD approach approximates the snapshot trajectory via the action of a best-fit linear operator. The resulting linear evolution equation is then projected onto the POD modes. Owing to the linearity, the reduced approximation is available in an explicit form and requires no further time stepping. This results in a highly efficient ROM, which, even for non-linear problems, is often orders of magnitude faster than a HF solver \cite{DMDKutz,MorDMD2014,RafiqDMD}. It is noteworthy that for non-linear problems, intrusive techniques are usually as expensive as HF solvers and require an additional layer of hyper-reduction to restore efficiency \cite{DEIMNonLinear}.

\subsection{Relation to previous works}
Several previous works have successfully used DMD to learn fast and accurate ROMs---see \cite{DMDKutz,MorDMD2014,ChristianDMD,DMDJian2020,randomDMD}, for instance. However, to the best of our knowledge, only the work in \cite{Lu2020} tailors DMD for hyperbolic PDEs. Ideologically, this work is similar to ours---it also tries to learn a ROM for a transformed solution and a de-transformer. However, a major difference is that it chooses (the de-transformer) $\td\varphi$ as the mapping to the Lagrangian coordinates and computes it by solving an ODE that governs the evolution of the characteristic curves. Following are the possible shortcomings of such an approach. Firstly, the resulting ODE is well posed only when the characteristic curves do not intersect i.e., when the solution does not exhibit spatial discontinuities. This limits the applicability of the method to solutions without spatial discontinuities. In contrast, the image registration technique that we use to compute $\td\varphi$ is well-equipped to handle space-time discontinuous solutions \cite{RegisterMOR,Taddei2020ST}. Secondly, computing Lagrangian coordinates requires the explicit form of the underlying PDE, which, in a data-driven setting, might not be available. Thirdly, for non-linear problems and multi-D spatial domains, tracing characteristics can be complicated. As a result, the examples in \cite{Lu2020} are mostly limited to 1D spatial domains. On the other hand, the image registration technique that we use is extendible to even complex multi-D spatial domains \cite{taddei2021Complex}.

The symmetry-reduction (SR) framework can be viewed as an inspiration behind our decomposition in \eqref{decompose u}---we refer to \cite{FrozenROM,Beyn2004,Rowley2001,Sapsis2018} for details of the SR framework. Roughly speaking, this framework decomposes $u$ as $\mcal H(\hat u)$, where $\mcal H$ is a continuous transformation that acts on the so-called template function $\hat u$. A finite-dimensional transformation $\mcal H$ results from a careful analysis of the underlying PDE, and is evolved using the so-called phase conditions. A PDE for the template function $\hat u$---derived using the equivariant property of the evolution operator stated below---is reduced using standard reduced order modelling techniques. The approximation for $\mcal H$ and the ROM for $\hat u$ are then recombined to approximate the solution $u$. 

One might be tempted to interpret the template function $\hat u$ as the transformed solution $g$ and the continuous transformation $\mcal H(g)$ as $g\circ \td\varphi$, and conclude that our decomposition is the same as that of SR. However, there are some key differences that one needs to consider. Firstly, the SR technique assumes that the evolution operator is equivariant i.e., it commutes with $\mcal H$. Only then can one derive an evolution equation for $\hat u$. In contrast, we make no such assumptions on the evolution operator. Our ROM for $\hat u$ is learned directly from the data and the learning technique does not require the equivariance property, or for that matter, any knowledge of the underlying dynamical system. As a result, our technique applies to a much broader class of evolution equations. Secondly, we compute both $\varphi$ and $\td\varphi$ in a purely data-driven manner without any prior explicit knowledge of the underlying hyperbolic PDE---recall that the SR framework carefully analyzes the underlying hyperbolic PDE to have a representation for $\mcal H$. Lastly, as already pointed out in \cite{RegisterMOR} and as the name suggests, SR decomposes the solution to reduce the symmetries in a dynamical system. We, on the other hand, decompose the solution solely from approximation considerations. 

Recent works indicate that auto-encoders can provide accurate non-linear reduced spaces for hyperbolic PDEs \cite{Fresca2020,KevinAuto,kim2020}. However, as noted in \cite{mojgani2020physicsaware}, an auto-encoder applied directly to the solution can result in an oscillatory approximation. Furthermore, despite a large training set, even short-time predictions can be inaccurate. Authors in \cite{mojgani2020physicsaware} circumvent both these problems by first transforming the snapshot matrix via a transformation of the space-time domain. An auto-encoder is then applied to the transformed snapshots and a long short-term memory (LSTM) architecture is used for time evolution. The takeaway is that even for modern artificial intelligence-based learning techniques, a decomposition of the form \eqref{decompose u} seems like a crucial first step---particularly when one seeks accurate predictions. The bounds on the latent space of an auto-encoder developed in \cite{franco2021deep} further elucidate the importance of (at least) Lipschitz continuity while applying auto-encoders. 

Note that by using POD for data compression (which is a type of an auto-encoder) and DMD for temporal evolution instead of LSTM, at an abstract level, one can relate our framework to that of \cite{mojgani2020physicsaware}. However, the work in \cite{mojgani2020physicsaware} is limited to 1D spatial domains. Furthermore, its efficiency, as compared to a HF solver, is unclear. Our approach on the other hand caters to multi-D spatial domains. Also, owing to the efficiency of the DMD approach, it is orders of magnitude faster than a HF solver. We will later empirically establish the efficiency of our method.

\subsection{Organization}
Rest of the article is organized as follows. \Cref{sec: DMD} presents the (plain) DMD framework in a general setting. \Cref{sec: ROM hyp} specializes the DMD framework from \Cref{sec: DMD} to hyperbolic PDEs. \Cref{sec: summary} summarizes our algorithm. \Cref{sec: num results} presents the numerical examples, and \Cref{sec: conclusion} closes the article with a conclusion.

 \section{Learning ROMs with DMD}\label{sec: DMD}
We briefly summarize the (plain) DMD approach for learning a ROM from snapshot data. We consider a general setting where we collect snapshots of a time-dependent vector $\mcal F(t)$ and learn a ROM for the same. Later in \Cref{sec: ROM hyp}, we choose $\mcal F(t)$ as the transformed solution and the de-transformer, thereby, specializing our framework to hyperbolic PDEs. We closely follow the formulation from \cite{DMDKutz,DMDTheory}, and as such, introduce no new formulations here. We re-iterate that only for the sake of completeness and ease of implementation, we use the DMD approach. In practise, any other learning techniques mentioned earlier would also suffice.

We acknowledge that since its inception, DMD has underwent numerous updates like the multi-resolution version \cite{mrDMD}; randomized version \cite{randomDMD}; extended version \cite{EDMD}; sparsity promoting version \cite{sparseDMD}, etc. Whether choosing a different DMD version improves the results that we report later is an interesting question in its own right. However, given the limited scope of our article, we refrain from comparing the different DMD versions here.

\subsection{Best-fit linear operator} Consider a set of discrete time instances 
\begin{gather}
\parSp_{train} := \{\ttrain{i}\}_{i=1,\dots,K}\subset [0,T], \label{train set}
\end{gather}
ordered such that 
$
0 = t_1 < t_2\dots < t_K = T.
$
For simplicity, we consider a constant time step size i.e., $t_k = t_1 + \Delta t\times (k-1)$, where $\Delta t>0$ denotes the time step size---extension to non-uniform time steps can be found in \cite{DMDTheory}. We consider a time-dependent vector $\mcal F(t)\in\mbb R^N$ and collect its snapshots in a snapshot matrix $\snapMat{\mcal F}{1}{K}\in\mbb R^{N\times K}$ given as 
\begin{gather}
\snapMat{\mcal F}{1}{K}:=\left(\mcal F(t_1),\dots,\mcal F(t_K)\right).
\end{gather}
These snapshots can result either from a dynamical system of the form $\mcal F(t_{k}) = \mcal L(\mcal F(t_{k-1}))$, or from experiments. Neither the explicit form of the evolution operator $\mcal L$, nor the details of the experiments, are required by the subsequent procedure. 

Consider a linear best-fit evolution operator $\td{\mcal A}:\mbb R^N\to\mbb R^N$, the action of which well approximates the discrete time trajectory of our snapshots. Equivalently,
\begin{gather}
\mcal F(t_{k + 1}) \approx \td{\mcal A}\mcal F(t_{k}). \label{best-fit}
\end{gather}
By representing the above expression in a matrix form, the linear operator can be computed by a least-squares best-fit given as
\begin{gather}
\td{\mcal A} := \argmin_{\mcal A^*\in\mbb R^{N\times N}}\|\snapMat{\mcal F}{2}{K}-\mcal A^*\snapMat{\mcal F}{1}{K-1}\|_F,
\end{gather}
where $\|\cdot\|_F$ represents the Frobenius norm of a matrix. Notice that $\snapMat{\mcal F}{2}{K}$ and $\snapMat{\mcal F}{1}{K-1}$ contain snapshots for the time intervals $[t_2,t_K]$ and $[t_1,t_{K-1}]$, respectively. Therefore, $\snapMat{\mcal F}{2}{K}\approx\td{\mcal A}\snapMat{\mcal F}{1}{K-1}$ is a matrix representation of the relation in \eqref{best-fit}. A solution to the above problem is given in terms of the pseudo-inverse $\snapMat{\mcal F}{1}{K-1}^\dagger$ and reads
\begin{gather}
\td{\mcal A} = \snapMat{\mcal F}{2}{K}\snapMat{\mcal F}{1}{K-1}^\dagger. \label{def A}
\end{gather}

\subsection{Reduced order model (ROM)}
We now develop a ROM for the vector $\mcal F(t)$. We approximate this vector in a set of POD modes and derive a governing equation for the POD coefficients. Owing to the linearity of our best-fit evolution operator, eventually, we will have an explicit expression for these POD coefficients. The details are as follows. 

Consider a low-rank singular value decomposition (SVD) based approximation of the snapshot matrix $\snapMat{\mcal F}{1}{K-1}$ given as 
\begin{gather}
\snapMat{\mcal F}{1}{K-1}  \approx \mcal U_n \Sigma_n \mcal V_n^*,\label{low-rank snapmat}
\end{gather}
where $(\cdot)^*$ represents the Hermitian transpose of a matrix. The matrix $\mcal U_n\in\mbb R^{N\times n}$ and $\mcal V_n\in\mbb R^{K\times n}$ contain the first $n\in\mbb N$ left and right singular vectors, respectively, of the snapshot matrix. Furthermore, $\Sigma_n\in\mbb R^{n\times n}$ is a diagonal matrix, which, at its diagonal, contains the first $n$ singular values of the snapshot matrix. Later, we will be more interested in understanding how the reduced order modelling error decays with $n$, and to this end, we will vary $n$ over a range of numbers. In practise, the value of $n$ can be chosen such that the relative error of the above approximation---which follows from the Schmidt-Eckart-Young theorem \cite{OptimalSVD}---stays below a given user-defined tolerance. The work in \cite{Rozza2008} outlines an implementation along these lines.

The columns of the matrix $\mcal U_n$ are the so-called POD modes. Approximating $\mcal F(t)$ in the span of these modes provides
\begin{gather}
\mcal F(t) \approx \mcal U_n\alpha_{\mcal F}(t),
\end{gather}
where $\alpha_{\mcal F}(t)\in\mbb R^n$ is a vector containing all the POD coefficients. A discrete-in-time evolution equation for this vector results from replacing the above approximation into our best-fit linear evolution equation \eqref{best-fit}. This provides
\begin{gather}
\alpha_{\mcal F}(t_{k+1}) = \td{\mcal A}_n\alpha_{\mcal F}(t_k)\hspB \text{where}\hspB \td{\mcal A}_n := \mcal U_n^* \td{\mcal A} \mcal U_n, \label{primitive POD}
\end{gather}
with the initial data $\alpha_{\mcal F}(t_1) := \mcal U_n^*\mcal F(t_1)$. The matrix $\td{\mcal A}_n$ is often referred to as the POD reduced matrix. 

With the eigendecomposition of $\td{\mcal A}_n$, we derive an explicit solution for the above equation.
Let $\{(\td\lambda_i,\td W_i)\}_{i=1,\dots,n}$ represent a pair of eigenvalues and eigenvectors of $\td{\mcal A}_n$. By repeatedly applying the operator $\td{\mcal A}_n$ to the projected initial data $\td b := (\td W)^\dagger \alpha_{\mcal F}(t_1)$, we recover an explicit expression for $\alpha_{\mcal F}(t)$ that reads
\begin{gather}
\alpha_{\mcal F}(t) = \sum_{i=1}^n \td W_i \exp(\td\omega_i t)\td b_i = \td W \exp(\td\omega t)\td b, \label{DMD naieve}
\end{gather}
where, for convenience, we define $\td{\omega}_i := \log(\td{\lambda}_i)/\Delta t$, and the diagonal matrix $\td\omega\in\mbb R^{n\times n}$ collects the different $\td\omega_i$ at its diagonal. Furthermore, the matrix $\td W\in\mbb R^{n\times n}$ contains the vectors $\td W_i$ as its columns. With the above expression, we recover a ROM for $\mcal F(t)$ that reads
\begin{gather}
\mcal F(t) \approx \mcal U_n\alpha_{\mcal F}(t) = \mcal U_n\td W \exp(\td \omega t)\td b.  \label{rom prim}
\end{gather}
Note that the above approximation is equation-free i.e., for any given $t\in [0,\infty)$, no further time stepping is required to approximate $\mcal F(t)$. 

\subsection{Practical considerations: DMD algorithm} In practice, two considerations hinder the applicability of the above approach. Firstly, the matrix $\td{\mcal A}$, computed via \eqref{best-fit}, can be severely ill-conditioned. Secondly, the value of $N$, which often denotes the dimensionality of a HF solver, can be extremely large. Under memory restrictions, it might be infeasible to store the $N\times N$ matrix $\td{\mcal A}$ and perform any subsequent algebraic manipulations. Note that in the above formulation, $\td{\mcal A}$ is required to compute the POD reduced matrix $\td{\mcal A}_n$.

The DMD algorithm proposed in \cite{DMDTheory} circumvents both the above issues by replacing the pseudo-inverse of the snapshot matrix $\snapMat{\mcal F}{1}{K-1}$, appearing in \eqref{best-fit}, by the pseudo-inverse of its low-rank SVD approximation given in \eqref{low-rank snapmat}. This replacement provides
\begin{gather}
\td{\mcal A} \approx \mcal A:= \snapMat{\mcal F}{2}{K}\mcal V_n\Sigma_n^{-1}\mcal U_n^*.
\end{gather}
Now, replacing $\td{\mcal A}$ by $\mcal A$ in the definition for $\td{\mcal A}_n$ given in \eqref{primitive POD}, we find 
\begin{equation}
\begin{aligned}
\td{\mcal A}_n\approx \mcal A_n := \mcal U_n^* \mcal A \mcal U_n = \mcal U_n^*\snapMat{\mcal F}{2}{K}\mcal V_n\Sigma_n^{-1}. 
\end{aligned}
\end{equation}
We emphasize that to compute $\mcal A_n$, one does not need to explicitly compute the $N\times N$ matrix $\mcal A$.

We approximate the eigendecomposition $(\td W,\td\lambda)$ of $\td{\mcal A}_n$ by the eigendecomposition $(W,\lambda)$ of $\mcal A_n$. Replacing this approximation in our ROM derived earlier in \eqref{rom prim}, we finally arrive at the DMD-based ROM given by
\begin{gather}
\mcal F(t) \approx \mcal F_n(t):= \mcal U_n W \exp(\omega t)b,  \label{rom DMD}
\end{gather}
where $b = (\mcal U_nW)^\dagger \mcal F(t_1)$, and $\omega := \log(\lambda)/\Delta t$.
Note that the columns of the matrix $\mcal U_n W$ are an approximation to the eigenvectors of the evolution operator $\td{\mcal A}$ and are the so-called DMD modes. These modes can be viewed as an approximation to the so-called Koopman modes of the infinite-dimensional, but linear, Koopman operator \cite{NonLinearFlows2009,DMDTheory}. Furthermore, the eigendecomposition $(\mcal U_n W,\lambda)$ is what one refers to as the DMD of the best-fit linear operator $\td{\mcal A}$.

\section{Learning ROMs for hyperbolic PDEs}\label{sec: ROM hyp}
This section specializes the DMD approach discussed earlier to hyperbolic PDEs. We start with some preliminaries.
\subsection{Preliminaries}
A general hyperbolic PDE is given as 
\begin{equation}
\begin{gathered}
\pd_t u + \nabla \cdot f(u) = 0,\hsp \text{on}\hsp \Omega\times [0,T],\hspB u = u_0,\hsp\text{on}\hsp \Omega\times \{0\},\\
u = u_{bc},\hsp \text{on}\hsp \pd\Omega\times [0,T],
\end{gathered}
\end{equation}
where $u_0$ and $u_{bc}$ represent the initial and the boundary data, respectively. We interpret the boundary conditions in the weak sense of \cite{FlochBC}. For simplicity, the solution $\sol{t}$ is assumed to be scalar-valued. An extension to vector-values functions is straightforward; to corroborate this claim, later, we will consider a numerical example with a vector-valued solution. The time instance $T > 0$ is some finite time horizon, and $\Omega\subset\mbb R^d$ is the spatial domain. The function $f:\mbb R\to\mbb R^d$ is the so-called flux-function, with $\nabla$ being a gradient operator in $\mbb R^d$. 

We compute a HF approximation of the solution via a second-order finite volume scheme equipped with the local Lax Friedrich flux and the van Leer flux limiter \cite{vanLeer1974}. For time-stepping, we consider an explicit second-order Runge--Kutta scheme with the CFL number set to $0.5$.
 We denote the HF approximation by
\begin{gather}
\sol{t}\approx \solFV{t}\in\mcal X_N\subset L^2(\Omega).
\end{gather}
where $\opn{dim}(\mcal X_N) = N$. The space $\mcal X_N$ is a finite-volume space of piecewise constant functions defined over a shape-conforming and regular spatial discretization of $\Omega$ given as 
\begin{gather}
\Omega = \bigcup_{i=1}^N\mcal I_i,
\end{gather}
where $\mcal I_i$ represents the $i$-th grid cell. With $\Pi : L^2(\Omega)\to \mcal X_N$ we represent the orthogonal $L^2$ projection operator. For the sake of completeness, we have chosen a finite-volume HF solver. In practise, any other HF solver would also suffice.

For later convenience, with 
\begin{gather}
\td{\mcal X}_N\subset L^2(\Omega)
\end{gather}
 we represent a space of piecewise linear and continuous finite-element functions defined over the spatial mesh. The space $\td{\mcal X}_N$ is $\td N$ dimensional with $\td N$ being the number of vertices in the spatial mesh. With $\td\Pi : C^0(\bar{\Omega})\to\td{\mcal X}_N$ we represent a projection operator defined as 
 \begin{gather}
 \td\Pi h (\hat x_i) := h (\hat x_i),\hspB \forall i\in \{1,\dots,\td N\}, \label{def td Pi}
 \end{gather}
 where $\{\hat x_i\}_i$ denotes the set of vertices of the spatial mesh. Note that because $\td{\mcal X}_N$ contains piecewise linear and continuous functions, the above relation uniquely defines $\td\Pi h$.

\subsection{Decompose then learn approach}
As stated earlier, solutions to hyperbolic PDEs exhibit temporal discontinuities and a ROM learned directly for the solution can be inefficient. Therefore, we propose to decompose the HF solution as 
\begin{gather}
\solFV{t} = g_N(\td \varphi(\cdot,t),t)\hspB \text{where}\hspB g_N(\cdot,t) := u_N(\varphi(\cdot,t),t). \label{decompose uN}
\end{gather}
Recall that $\td\varphi(\cdot,t) = \varphi(\cdot,t)^{-1}$.
Our idea was to first learn a ROM for $g_N(\cdot,t)$ and $\td{\varphi}(\cdot,t)$, followed by recovery of a ROM for $\solFV{t}$ via the above relation---refer back to \Cref{fig: learn ROM} for a summary. Throughout this subsection, we assume that, at the time instances $\parSp_{train}$ given in \eqref{train set}, we have access to both, the snapshots of the HF solution $\{\solFV{t}\}_{t\in\parSp_{train}}$ and the snapshots of the spatial transform $\{\varphi(\cdot,t)\}_{t\in\parSp_{train}}$. The former results from our HF numerical solver, whereas for the latter, we use the image-registration technique outlined later in \Cref{sec: snapshots varphi}.

\subsubsection{ROM for $g_N$ and $\td\varphi$}
We specialize the DMD framework from \Cref{sec: DMD} to learn a ROM for $g_N(\cdot,t)$ and $\td\varphi(\cdot,t)$. 
To proceed further, we need finite dimensional HF representations for $g_N(\cdot,t)$ and $\td\varphi(\cdot,t)$. We use the projection operator $\Pi$ to project $g_N(\cdot,t)$ onto $\mcal X_N$ and collect the resulting degrees of freedom in a vector $G(t)\in\mbb R^N$. Likewise, since we expect $\varphi(\cdot,t)$ to be a diffeomorphism (explained later) it is reasonable to project $\td\varphi(\cdot,t)$ onto the finite-element space $\td{\mcal X}_n$ using the projection operator $\td\Pi$ given in \eqref{def td Pi}. We compute the value $\td\varphi(\hat x_i,t)$ at the $i$-th vertex $\hat x_i$  using the non-linear least-squares problem given as
\begin{gather}
\td\varphi(\hat x_i,t) := \argmin_{x\in\Omega}\|\varphi(x,t)-\hat x_i\|^2_2,
\end{gather}
which we solve using the \texttt{lsqnonlin} function of MATLAB. We collect the degrees of freedom of $\td\Pi \td\varphi(\cdot,t)$ in the vector $\td\Phi(t)\in\mbb R^{\td N\times d}$. 

Recall that the framework in \Cref{sec: DMD} provides a ROM for a time-dependent vector $\mcal F(t)$. In this framework, we now choose $\mcal F \equiv G$ and $\mcal F\equiv \td\Phi$ and recover the following ROMs
\begin{equation}
\begin{aligned}
G(t)\approx G_n(t) :=&\mcal U_n^G W^G \exp(\omega^G t)b^G,\\
\td\Phi(t)\approx \td\Phi_n(t) :=&\mcal U_n^{\td\Phi} W^{\td\Phi} \exp(\omega^{\td\Phi} t)b^{\td\Phi}.
\end{aligned}
\end{equation}
For completeness, we briefly recall the different objects appearing in the expression for $G_n(t)$---similar definition holds for the terms appearing in $\td\Phi_n(t)$. The matrices $\mcal U_n^G$ and $\mcal V_n^G$ contain the first $n$ left and right singular vectors, respectively, of the snapshot matrix
\begin{gather}
\snapMat{G}{1}{K-1} := \left(G(t_1),\dots, G(t_{K-1})\right)). \label{snapmat G}
\end{gather}
Furthermore, the diagonal matrix $\Sigma^G_n$, at its diagonal, contain the first $n$ singular values of the above matrix.
The pair $(W^G,\lambda^G)$ is the eigendecomposition of the POD reduced matrix
\begin{gather}
\mcal A_n^G:=\left(\mcal U_n^G\right)^*\snapMat{G}{2}{K}\mcal V^G_n\left(\Sigma_n^G\right)^{-1}, 
\end{gather}
and $\omega^G := \log(\lambda^G)/\Delta t$. The matrix $\mcal U_n^G W^G$ contains, as its columns, the DMD modes for $G(t)$. Lastly, the amplitude vector $b^G$ results from projecting the initial data onto the DMD modes and reads $b^G := (\mcal U_n^G W^G)^\dagger G(t_1)$.

For further convenience, we express the above ROMs in functional forms.
Let $g_n(\cdot,t)$ and $\td\varphi_n(\cdot,t)$ represent functions in $\mcal X_N$ and $[\td{\mcal X}_N]^{d}$, respectively, whose degrees of freedom are uniquely identified by the vectors $G_n(t)$ and $\td\Phi_n(t)$, respectively. Then, we have the reduced approximation
\begin{gather}
g_N(\cdot,t)\approx g_n(\cdot,t),\hspB \td\varphi(\cdot,t)\approx \td\varphi_n(\cdot,t).
\end{gather}

\begin{remark}[POD modes for $g_N$ and $\td\varphi$]
To balance accuracy with computational cost, one can consider different number of POD modes to approximate $g_N(\cdot,t)$ and $\td\varphi(\cdot,t)$. Such an approach would require an a-posteriori error indicator, which will be a part of our future investigations.
\end{remark}
\subsubsection{ROM for $u_N$}
Replacing the above reduced approximations in the decomposition for $\solFV{t}$ given in \eqref{decompose uN}, we finally recover a ROM for $\solFV{t}$ that reads
\begin{gather}
\solFV{t} \approx u_n(\cdot,t) := g_n(\td\varphi_n(\cdot,t),t). \label{ROM uN}
\end{gather}

We now consider the following question: Given that $g_n$ and $\td\varphi_n$ are accurate approximations (in some sense) of $g_N(\cdot,t)$ and $\td\varphi(\cdot,t)$, respectively, how well does $u_n(\cdot,t)$ approximates $\solFV{t}$? The result below provides an answer to some extent. In essence, the result establishes that
\begin{gather}
\|u_N-u_n\|_{L^{\infty}([0,T];L^1(\Omega))} = \mcal O(\delta),
\end{gather}
where $\delta$ is the error in approximating $g_n$ and $\td\varphi_n$. Thus, in terms of scaling with $\delta$, the ROM $u_n$ performs as good as the ROMs $g_n$ and $\td\varphi_n$. Later, we further elaborate upon the result. First, let us summarize the result below. For notational simplicity, we consider a 1D spatial domain i.e., $d=1$. A similar result can be obtained for multi-D problems.

\begin{theorem}\label{lemma: err bound}
Let $d=1$. Assume that for all $t\in [0,T]$, $\varphi(\cdot,t)$, $\td\varphi(\cdot,t)$ and $\td\varphi_n(\cdot,t)$ are $W^{1,\infty}(\Omega)$ homeomorphisms, where $W^{1,\infty}(\Omega)$ is a Sobolev space of functions with bounded weak-derivatives upto the first-order. Let $n\in \mbb N$ be such that for a $\delta \geq 0$ it holds
\begin{gather}
\|g_N-g_n\|_{L^\infty([0,T]; L^1(\Omega))}\leq \delta\hspB\text{and}\hspB \|\td\varphi-\td\varphi_n\|_{L^{\infty}([0,T];W^{1,\infty}(\Omega))}\leq \delta.
\end{gather}
Then,  the error $\|u_N-u_n\|_{L^{\infty}([0,T];L^1(\Omega))}$ satisfies
\begin{equation}
\begin{aligned}
\|u_N-u_n\|_{L^{\infty}([0,T];L^1(\Omega))} \leq C_1(u,\td \varphi)\delta +\delta^2. \label{err bound}
\end{aligned}
\end{equation}
Above, $C_1(u,\td \varphi):= (\|u\|_{L^{\infty}([0,T];BV(\Omega))} + \|\nabla \td\varphi\|_{L^{\infty}([0,T]\times\Omega)})$, $|\Omega|$ represents the Lebesgue measure of $\Omega$, and $BV(\Omega)$ is a space of functions with bounded variations.
\end{theorem}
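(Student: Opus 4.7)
The strategy is to telescope the error through an intermediate composition. Inserting $g_N(\td\varphi_n(\cdot,t),t)$ splits the pointwise error as
\begin{gather*}
u_N(\cdot,t) - u_n(\cdot,t) = \underbrace{g_N(\td\varphi,t) - g_N(\td\varphi_n,t)}_{(I)} + \underbrace{g_N(\td\varphi_n,t) - g_n(\td\varphi_n,t)}_{(II)}.
\end{gather*}
Piece $(I)$ captures the error in the de-transformer and will be handled via a $BV$ translation estimate. Piece $(II)$ captures the $L^1$ discrepancy between $g_N$ and $g_n$ viewed on the mapped domain and will be handled by a change of variables; it is this second piece that naturally produces the $\delta^2$ correction.

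For $(I)$, the key input is that $g_N = u_N \circ \varphi$ inherits the $BV$ regularity of $u$ (in $1$D the total variation is invariant under monotone reparametrizations, so $|g_N|_{BV}$ is controlled by $\|u\|_{BV}$ independently of $\varphi$). I would then interpolate along the segment $\gamma_s(x) = (1-s)\td\varphi_n(x,t) + s\td\varphi(x,t)$ and, after mollifying $g_N$, write the pointwise difference as the integral of $\nabla g_N$ over $[\td\varphi_n(x,t),\td\varphi(x,t)]$. A Fubini exchange combined with $\|\td\varphi - \td\varphi_n\|_{L^\infty}\leq\delta$ and the $W^{1,\infty}$-homeomorphism assumption on $\td\varphi$ then yields $\|(I)\|_{L^1(\Omega)}\leq C\|u\|_{BV(\Omega)}\|\nabla\td\varphi\|_{L^\infty}\,\delta$, contributing part of the $C_1(u,\td\varphi)\delta$ term.

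For $(II)$, applying the change of variable $y = \td\varphi_n(x,t)$ (valid because $\td\varphi_n$ is a homeomorphism) gives
\begin{gather*}
\|(II)\|_{L^1(\Omega)} = \int_\Omega |g_N - g_n|(y,t)\,\bigl|\det\nabla\td\varphi_n^{-1}(y,t)\bigr|\,dy.
\end{gather*}
Since $\td\varphi_n = \td\varphi + O_{W^{1,\infty}}(\delta)$, a first-order perturbation of the Jacobian gives $|\det\nabla\td\varphi_n^{-1}| \leq |\det\nabla\td\varphi^{-1}| + C\delta$, and combining with $\|g_N - g_n\|_{L^1}\leq\delta$ produces both the remaining $\|\nabla\td\varphi\|_{L^\infty}\delta$ contribution to $C_1(u,\td\varphi)\delta$ and the quadratic cross term of order $\delta^2$ in \eqref{err bound}. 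Adding the bounds on $(I)$ and $(II)$ then completes the proof.

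The main obstacle I anticipate is the rigorous justification of the $BV$ translation estimate in $(I)$: because $\nabla g_N$ is only a Radon measure, one cannot apply the fundamental theorem of calculus pointwise along $\gamma_s$, so one must mollify $g_N$, run the interpolation/Fubini argument in the smooth setting, and pass to the limit using $BV$ lower semicontinuity. A secondary subtlety is that the change-of-variable step in $(II)$ naturally produces $\|\nabla\varphi\|_{L^\infty}$ (the Jacobian of the inverse map), which in the $1$D $W^{1,\infty}$-homeomorphism setting must be related back to the $\|\nabla\td\varphi\|_{L^\infty}$ stated in $C_1(u,\td\varphi)$ via the identity $\nabla\varphi\cdot\nabla\td\varphi = 1$, using the positive lower bound on $|\nabla\td\varphi|$ implicit in the homeomorphism hypothesis.
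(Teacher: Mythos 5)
Your proposal follows essentially the same route as the paper's proof: the identical splitting into the de-transformer error $g_N(\td\varphi,t)-g_N(\td\varphi_n,t)$ and the transformed-solution error $g_N(\td\varphi_n,t)-g_n(\td\varphi_n,t)$, a $BV$ translation estimate for the first piece, and a change of variables plus a first-order perturbation of the Jacobian (the source of the $\delta^2$ term) for the second. The only differences are in bookkeeping: the paper invokes Lemma~3.1 of \cite{Welper2017} for the translation estimate, which carries no Jacobian prefactor, and uses the TVD property of the finite-volume solver to pass from $|u_N|_{BV}$ to $|u|_{BV}$, so your constants (with the extra $\|\nabla\td\varphi\|_{L^\infty}$ factor in $(I)$ and the inverse Jacobian in $(II)$) would not reproduce $C_1(u,\td\varphi)$ verbatim but yield the same $\mcal O(\delta)$ conclusion.
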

\begin{proof}
See \Cref{app: error bound}.
\end{proof}
Remarks related to the above result are in order.
\begin{enumerate}
\item\textbf{Diffeomorphic $\varphi$:} Empirically, the image registration technique that we outline later provides a diffeomorphic $\varphi(\cdot,t)$ when the maximum displacement $\sup_{x\in\Omega}|\varphi(x,t)-x|$ is not \textit{too large}---see \cite{Amit1994} for further details. In the context of image registration, these are the so-called small-displacement problems. As we demonstrate later, even for this sub-class of problems, our method significantly outperforms the standard DMD approach. However, for problems with large displacements, our registration technique is inadequate and should be replaced with the framework proposed in \cite{LDDM}. We plan to undertake such an approach in the future.

\item\textbf{Gradient of $\td\varphi$:} The size of the Jacobian $\|\nabla \td\varphi\|_{L^{\infty}}$ should not be too large. Otherwise, because of the term $C_1(u,\td\varphi)$ appearing in the error bound \eqref{err bound}, our ROM for $\solFV{t}$ might be inaccurate. As shown in \cite{SarnaCalib2020}, $\|\nabla \td\varphi\|_{L^{\infty}}$ is small if the spatial discontinuities in $\solFV{t}$ do not move too far away from the discontinuities in a reference solution $\solFV{\tref}$, where $\tref\in [0,T]$. Later in \Cref{sec: snapshots varphi} we will further elaborate on this point. 

\item\textbf{Homeomorphic $\td\varphi_n$:} We assume that the ROM $\td\varphi_n(\cdot,t)$ is a homeomorphism. We acknowledge that this is a difficult property to guarantee. Indeed, all of the previous works in \cite{Welper2017,Nair,RegisterMOR,MATS,Cagniart2019}, which try to either approximate or generalize (via regression) a spatial transform, cannot guarantee this property. The reason being that we (and all of these previous works) compute $\td\varphi_n(\cdot,t)$ in a linear reduced space. In our context, the linear reduced space is spanned by the POD modes and is not a subset of all the homeomorphisms over $\Omega$.\footnote{A span of POD modes contains the zero element which is not a homeomorphism from $\Omega$ to $\Omega$.} Therefore, without any additional constraints on the POD coefficients, a POD approximation might not be a homeomorphism over $\Omega$. Nevertheless, for an $n\ll N$, the linear approximation $\td\varphi_n$ is still accurate in the $L^p$ (or $W^{p,\infty}$) sense. Furthermore, it is cheap and easy to compute, and in our experience, if the condition on the movement of the discontinuities stated in the previous point is satisfied then, it is a homeomorphism with well-behaved derivatives. We will elaborate further with numerical examples.

\item\textbf{Generality of the error bound:} The error bound in \eqref{err bound} is general in the sense that it is independent of the technique used to compute snapshots of the spatial transform $\varphi$. As a result, it is also valid for a registration technique different from ours---the optimal transport-based \cite{RimOT} or the Lagrangian coordinates based \cite{MojganiLagrangian,Lu2020} techniques being two noteworthy examples. 

\end{enumerate}

\subsection{Snapshots of $\varphi$}\label{sec: snapshots varphi} To realize the above technique, we require the snapshots $\{\varphi(\cdot,t)\}_{t\in\parSp_{train}}$. We consider an image registration technique to compute these snapshots. The technique is outlined below, we refer to \cite{RegisterMOR,sarna2021datadriven} for additional details. We restrict our discussion (and our numerical examples) to a square spatial domain $\Omega = [0,1]^2$. The subsequent procedure can be generalized to a class of curved spatial domains by partitioning it into subsets, followed by mapping each of these subsets onto a unit square \cite{taddei2021Complex}.

Consider the splitting
\begin{gather}
\Psi(\cdot,t):= \opn{Id} - \varphi(\cdot,t), \label{split varphi}
\end{gather}
where $\Psi(\cdot,t)$ is the displacement field, and $\opn{Id}(x) := x$. We consider a $\Psi(\cdot,t)$ that belongs to the space $\mcal P_M:=[\mbb P_M]^2$ that reads
\begin{gather}
\mbb P_M:=\opn{span}\{L_{jk}\Upsilon\}_{j,k=1,\dots,M}, \label{def PM}
\end{gather}
where $L_{ij}(x)$ is a product of the $i$-th and $j$-th Legendre polynomials $l_i(x_1)$ and $l_j(x_2)$, respectively, and $\Upsilon(x):=\Pi_{m=1}^d x_m(1-x_m)$. By the definition of $\Upsilon$, we find the boundary condition $\Psi(x,t) = 0$. Note that this is a stronger version of the boundary conditions than that proposed in Proposition-2.3 of \cite{RegisterMOR}. At least for the test cases we consider, these boundary conditions provide reasonable results without any additional constraints on the Jacobian of $\varphi$. One may also consider periodic boundary conditions with a Fourier series expansion for $\Psi(\cdot,t)$---see \cite{Amit1994}.

We compute $\Psi(\cdot,t)$ using the minimization problem
\begin{gather}
\Psi(\cdot,t) = \argmin_{\Psi^*\in\mcal P_M}\left(\mcal M(\Psi^*,t,\tref)) + \mcal R(\Psi^*)\right),
\end{gather}
where $\mcal M(\cdot,\cdot)$ is a so-called matching criterion, and the regularization term $\mcal R(\cdot)$ penalizes the spatial regularity of $\Psi$, which promotes a diffeomorphic $\varphi(\cdot,t)$ and provides numerical stability \cite{ReviewDeformReg,Klein2007}. The matching criterion $\mcal M$ should be such that the transformed solution $g_N(\cdot,t)$, at least ideally, has no discontinuities along the temporal domain and consequently, as compared to $\solFV{t}$, has much better approximability in a linear reduced space. Following the empirical success reported in \cite{Nair,RegisterMOR,Welper2017}, we set 
\begin{equation}
\begin{gathered}
\mcal M(\Psi^*,t,\tref) := \|u_N(\opn{Id}+\Psi^*,t)-\solFV{\tref}\|^2_{L^2(\Omega)},\hspB
\text{and}\hspB \mcal R(\Psi^*):=\epsilon\|\Delta \Psi^*\|^2_{L^2(\Omega;\mbb R^2)}.
\end{gathered}
\end{equation}
Following the previous works in \cite{RegisterMOR,Taddei2020ST,sarna2021datadriven}, we set $\epsilon = 10^{-3}$. We compute $M$ iteratively using the technique outlined in Section-3.2.1 of \cite{sarna2021datadriven}---we do not repeat the details here for brevity. Furthermore, the initial guess is set using the re-ordering scheme proposed in Section-3.1.2 of \cite{RegisterMOR}. To perform optimization, we resort to the \texttt{fmincon} routine of MATLAB.

The choice of the reference time-instance $\tref\in [0,T]$ is key in computing an accurate spatial transform. We consider a fixed time-invariant $\tref$ and set $\tref = T/2$. We expect such a choice to be accurate for problems where: (i) the discontinuity topology i.e., the number and the relative orientation of the discontinuities, does not change; and (ii) relative to the discontinuities in $\solFV{\tref}$, the discontinuities in any other solution do not move too close or too far away from each other. In case these conditions are violated, one should try to satisfy them locally by partitioning the time domain into subsets, followed by a local-in-time choice for $\tref$---see  \cite{WelperAdaptive,SarnaCalib2020}. Akin to the framework in \cite{NonLinearMOR}, this procedure will result in a local-in-time reduced space, which will adapt to the changing discontinuity topology. We plan to develop such an approach in the future. For now---similar to \cite{Welper2017,Nair}---we assume that the above two conditions are satisfied. Also note that these two conditions ensure that the gradients of $\varphi$ and $\td\varphi$ are well behaved, which, as noted in \Cref{lemma: err bound}, is desirable.

\begin{remark}[Approximability of $\td\varphi(\cdot,t)$ in the POD basis]
For our ROM to be accurate, both the transformed solution $g_N(\cdot,t)$ and the de-transformer $\td\varphi(\cdot,t)$ should be well approximable in the POD basis. Assuming that our registration technique works well, we expect $g_N(\cdot,t)$ to be well approximable in the POD basis. For $\td\varphi(\cdot,t)$ to be sufficiently regular in time and thus (hopefully) well approximable in the POD basis, the spatial discontinuities in the solution $\sol{t}$ should be sufficiently regular in time. Precisely, if $\sol{t}$ is discontinuous along the curve $\mcal D(t)\subset\Omega$ then, we want $\mcal D(\cdot)$ to be sufficiently regular---the precise quantification of the required regularity and the condition that makes $\mcal D(\cdot)$ regular is given in Corollary-3.4 of \cite{SarnaCalib2020}. We insist that for several problems of practical interest, and particularly for our numerical examples, this condition is indeed satisfied.
\end{remark}
\subsection{Why learn a dynamical system for $\td\varphi$?}
Assume that the solution $\sol{t}$ satisfies a dynamical system of the form $\pd_t u = \mcal L(u)$, with $\mcal L$ being some differential operator in space. Then, one can check that the transformed solution $g(\cdot,t)$ satisfies $\pd_t g = \mcal L_{\varphi}(g)$, with $\mcal L_{\varphi}$ being a transformed differential operator that depends upon $\varphi$---see \cite{RegisterMOR,Taddei2020ST} for some examples. This justifies our motivation behind learning a dynamical system (followed by learning a ROM) for $g$. However, it is not obvious why $\td\varphi$ would satisfy a dynamical system. Actually, if $\td\varphi(\cdot,t)$ is a diffeomorphism and if for all $x\in\Omega$, $\td\varphi(x,\cdot)\in C^1([0,T])$ then, one can show that it satisfies a dynamical system of the form
\begin{gather}
\pd_t \td\varphi(x,t) = v(\varphi(x,t),t)\hspB\forall (x,t)\in \Omega\times [0,T],
\end{gather}
where $v\in C^1([0,T]\times \bar\Omega,\mbb R^d)$ is a velocity field induced by $\td\varphi(x,t)$. The above result follows from the Cauchy--Lipschiz theorem and is well studied in the image registration literature---see \cite{LDDM,Trouve2002}, for instance. 

A few comments related to the above equation are in order.
\begin{enumerate}

\item \textbf{Relation to image registration:} For a certain class of image registration techniques, the velocity field results from an optimization problem, and the snapshots of the spatial transform are computed using the above ODE \cite{LDDM}. Our set-up is the exact opposite. Using the snapshots of $\td\varphi$, we learn (and then reduce) the dynamics of the above ODE.

\item \textbf{Relation to Lagrangian DMD:} Choosing $v(x,t)$ as the characteristic velocity of the hyperbolic PDE and computing $\td\varphi$ using the above ODE, we recover the Lagrangian DMD approach developed in \cite{Lu2020}. However, as explained earlier in the introduction, the resulting ODE is well-posed only for non-intersecting characteristic curves, which, for non-linear problems, limits the applicability of the Lagrangian DMD approach to space-time smooth solutions. Furthermore, for both linear and non-linear problems, to derive an explicit expression for the characteristic velocity, we necessarily require the explicit form of the underlying PDE, which might be unavailable in a data-driven setting.
\end{enumerate}

\subsection{The standard DMD approach}\label{sec: standard DMD}
We briefly recall the standard DMD approach since we use it in our numerical experiments. Let $U(t)\in\mbb R^N$ represent a vector that contains the degrees of freedom of $\solFV{t}$. In the standard approach, one does not perform any solution decomposition and directly chooses $\mcal F\equiv U$ in the framework developed earlier in \Cref{sec: DMD}. This results in a ROM that reads \cite{DMDKutz}
\begin{gather}
U(t)\approx U_n(t) :=\mcal U_n^U W^U \exp(\omega^U t)b^U, \label{approx u DMD}
\end{gather}
where all the matrices are as defined earlier in \Cref{sec: DMD}. For further discussion, we recall that $\mcal {U}_n^U$ contains the first $n$ left singular vectors of a snapshot matrix given as 
\begin{gather}
\snapMat{U}{1}{K-1} := \left(U(t_1),\dots, U(t_{K-1})\right). \label{snapmat U}
\end{gather}

\subsubsection{Interpolation and extrapolation regime}
The above ROM can either be used in the interpolation regime $t\in [0,T]$ or in the extrapolation regime $t > T$. For hyperbolic PDEs, extrapolation is a much more difficult task than interpolation. One can easily find examples where a ROM that is convergent (with $n$) in the interpolation regime might fail to converge in the extrapolation regime. Let us elaborate further. 

Assuming that the training set $\parSp_{train}$ is dense in $[0,T]$, in the interpolation regime, we expect to find an $n\in\mbb N$ to meet any given tolerance $\delta >0$ on the error $\sup_{t\in [0,T]}\|U(t)-U_n(t)\|_{l^2}$. Of course, if $\sol{t}$ has temporal discontinuities, which is usually the case, the value of $n$ might scale poorly with $\delta$; for instance, for the 1D advection equation with a unit step function as the initial data, $n$ scales as $\mcal O(1/\delta ^2)$---see \cite{PeterBook} for a proof. Nevertheless, at least there exists an $n$---no matter how large---such that one can expect to meet a given error tolerance. Equivalently, we expect the reduced approximation to converge in the interpolation regime---empirical results in \cite{DMDKutz} further corroborate our claim.

In the extrapolation regime, irrespective of the number of snapshots collected, our reduced approximation might not converge. There are a plethora of examples where the solution is orthogonal to all other solutions at previous time instances---see \cite{MATS,RimTR,Lu2020,RBBook}. Equivalently, for $T < t_0 \leq t$ and for all $t^*\in [0,T]$, we have $\lan U(t),U(t^*)\ran_{l^2} = 0$. As a result, for all $n\in\mbb N$ and for all $t_0 < t$, the solution $U(t)$ is orthogonal to the approximation space $\opn{range}(\mcal U_n)$, resulting in no convergence. We emphasize that the orthogonality condition can also hold for space-time smooth solutions, which, we expect, are well approximable in the interpolation regime---\cite{Lu2020} provides an example. Furthermore, even if the solution is not orthogonal to $\opn{range}(\mcal U_n)$, an approximation in $\opn{range}(\mcal U_n)$ can wrongly predict the shock locations in $\solFV{t}$. This usually results in large error values. In our numerical experiments, the standard DMD approach will perform poorly in the extrapolation regime. The above discussion explains our findings to some extent.

In contrast to the solution, the transformed solution $g_N$ exhibits minimal transport-dominated behaviour. As a result, it is well approximable both in the interpolation and the extrapolation regime---a similar comment holds for the de-transformer $\td\varphi$. However, for both the standard DMD and our new approach, as shown in the next section, the error in the extrapolation regime grows monotonically with time. Thus, despite the de and re-composition, for hyperbolic PDEs, accurate long-time prediction remains a challenging task---authors in \cite{Lu2020} make similar observations. Let us recall that a DMD-based ROM for non-linear parabolic and elliptic problems faces similar challenges \cite{lu2020prediction}.

 \section{Summary}\label{sec: summary}
In \Cref{summary: offline} and \Cref{summary: online}, we summarize the offline and the online stages of the algorithm, respectively. The offline phase relies on the \texttt{get_DMD_matrices} routine that we summarize in \Cref{summary: DMD}. This routine provides the different matrices required to compute a DMD-based ROM. 
\begin{algorithm}[ht!]
\caption{\texttt{get_DMD_matrices} routine}
\begin{algorithmic}[1] \label{summary: DMD}
\STATE \textbf{Input: } $n$, $\mcal S_{1,K}$, $\Delta t$
\STATE \textbf{Output: }$\mcal U_n ,$ $W,$ $\omega$, $b$
\STATE $\left[\mcal U_n,\Sigma_n,\mcal V_n\right]\leftarrow \texttt{svds}(\mcal S_{1,K-1},n)$\hsp\COMMENT{\texttt{svds} is the in-built MATLAB routine.}
\STATE $\mcal A_n \leftarrow \mcal U_n^*\mcal S_{2,K}\mcal V_n\Sigma_n^{-1}$ \hsp\COMMENT{$\mcal A_n$ is the POD reduced matrix.}
\STATE $\left[W,\lambda\right]\leftarrow \texttt{eig}(\mcal A_n)$ \hsp\COMMENT{\texttt{eig} is the in-built MATLAB routine.}
\STATE $\omega\leftarrow \log(\lambda)/\Delta t$, $b\leftarrow (\mcal U_n W)^\dagger s_1$ \hsp\COMMENT{$s_1$ is the first columns of $\mcal S_{1,K}$.}
\end{algorithmic}
\end{algorithm}

\begin{algorithm}[ht!]
\caption{Summary: Offline phase}
\begin{algorithmic}[1] \label{summary: offline}
\STATE \textbf{Input: } $n$, $\mcal S_{1,K}(G)$, $\mcal S_{1,K}(\td\Phi)$, $\Delta t$
\STATE \textbf{Output: }$\left\{\mcal U_n^G ,W^G,\omega^G, b^G\right\}$, $\left\{\mcal U_n^{\td\Phi} ,W^{\td\Phi},\omega^{\td\Phi}, b^{\td\Phi}\right\}$
\STATE $\left[\mcal U_n^G ,W^G,\omega^G, b^G\right]\leftarrow \texttt{get_DMD_matrices}(n,\snapMat{G}{1}{K},\Delta t)$
\STATE $\left[\mcal U_n^{\td\Phi} ,W^{\td\Phi},\omega^{\td\Phi}, b^{\td\Phi}\right]\leftarrow \texttt{get_DMD_matrices}(n,\snapMat{\td\Phi}{1}{K},\Delta t)$
\end{algorithmic}
\end{algorithm}

\begin{algorithm}[ht!]
\caption{Summary: Online phase}
\begin{algorithmic}[1] \label{summary: online}
\STATE \textbf{Input: }$\left\{\mcal U_n^G ,W^G,\omega^G, b^G\right\}$, $\left\{\mcal U_n^{\td\Phi} ,W^{\td\Phi},\omega^{\td\Phi}, b^{\td\Phi}\right\}$, $t$
\STATE \textbf{Output: }$u_n(\cdot,t)$
\STATE $G_n(t) \leftarrow \mcal U_n^G W^G \exp(\omega^G t)b^G$
\STATE $\td \Phi_n(t) \leftarrow \mcal U_n^{\td\Phi} W^{\td\Phi} \exp(\omega^{\td\Phi} t)b^{\td\Phi}$
\STATE $g_n(\cdot,t)\leftarrow G_n(t)$, $\td\varphi_n(\cdot,t)\leftarrow \td\Phi_n(t)$
\STATE $u_n(\cdot,t) = g_n(\td\varphi_n(\cdot,t),t)$
\end{algorithmic}
\end{algorithm}

\section{Numerical Results}\label{sec: num results}
We abbreviate our DMD approach as TS--DMD, where TS stands for transformed snapshots. The standard approach (outlined in \Cref{sec: standard DMD}) is abbreviated as DMD. Also recall that we abbreviate the high-fidelity solver as HF. 

\subsection{Description of test case} Following is a description of the test cases we consider.
\begin{enumerate}
\item \textbf{Test-1 (1D advection)} We consider the 1D advection equation
\begin{gather}
\pd_tu + \pd_x u  = 0,\hsp \text{on} \hsp \Omega\times[0,T],\hspB u = u_0,\hsp \text{on}\hsp\Omega\times \{0\}.
\end{gather}
We set $\Omega = (-0.2,2)$ and $T = 0.8$. The initial data $u_0$ reads
\begin{gather}
u_0 = \chi_{[\delta,\delta + 0.5]},
\end{gather}
where $\chi_A$ represents a characteristic function of the set $A\subset \mbb R$. We set $\delta := -0.2$. Along the boundary $\pd\Omega\times [0,T]$, we set $u = 0$.
\item \textbf{Test-2 (1D wave equation):} We consider the 1D wave equation (rewritten as a first order system)
    \begin{gather}
        \pd_t u + A \pd_x u = 0,\hsp\text{on}\hsp \Omega\times [0,T], \label{wave equation}
    \end{gather}
    where $u = (u_1,u_2)^T$ is the vector-valued solution, and the matrix $A$ reads
    \begin{gather}
    A := \left(\begin{array}{c c}
    0 & 1 \\ 
    1 & 0
    \end{array}\right). 
    \end{gather}
    We choose $\Omega = (-0.3,3)$, and $T = 0.6$. The initial data is given by the linear superposition
    \begin{equation}
    \begin{gathered}
   u_1(x,t=0) =  \frac{1}{\sqrt{2}}\left(w_1(x) + w_2(x)\right),\\ u_2(x,t=0) = \frac{1}{\sqrt{2}}\left(-w_1(x) + w_2(x)\right),
    \end{gathered}
    \end{equation}
    where $w_1$ and $w_2$ read
    \begin{equation}
    \begin{aligned}
    w_1(x) := &(\sin(2\pi (x + 0.2)) + 1)\chi_{[\delta_1-0.5,\delta_1]}(x),\\
     w_2(x) := &(\sin(2\pi (x-2.3)) + 1)\chi_{[\delta_2-0.5,\delta_2]}(x). \label{def: sin bumps}
    \end{aligned}
    \end{equation}
    We set $\delta_1:=0.3$, and $\delta_2 := 2.8$. Along the boundary $\pd\Omega\times [0,T]$, we prescribe $u = 0$. 
 \item \textbf{Test-3 (2D Burgers' equation):}  We consider the 2D Burgers' equation given as 
    \begin{gather}
    \pd_t u(x,t) + \left(\frac{1}{2},\frac{1}{2}\right)^T\cdot\nabla u(x,t)^2=0,\hsp \forall (x,t)\in\Omega\times [0,T].
    \end{gather}
	The initial data $u_0$ reads
    \begin{gather}
    u_0 = \chi_{[0,0.5]^2}. \label{ic burgers}
    \end{gather}
    We set $\Omega = (-0.1,1.4)^2$, and $T=1$. Along the boundary $\pd\Omega\times \parSp$, we prescribe $u = 0$. 
 
\end{enumerate}
\subsection{Error quantification} For a given time-instance $t\in [0,\infty)$, we define the relative $L^1(\Omega)$ error as 
\begin{gather}
E(n,t) := \frac{1}{\|\solFV{t}\|_{L^1(\Omega)}}\left(\|\solFV{t}-\solROM{n}{t}\|_{L^1(\Omega)}\right), \label{def E}
\end{gather}
where $\solFV{t}$ is the HF solution, and the reduced solution $\solROM{n}{t}$ can either result from DMD or TS--DMD. The average error inside a time-interval $[t_0,t_1]$ is given by
\begin{gather}
E_a(n,t_0,t_1) := \frac{1}{\#\parSp_{test}}\sum_{t\in\parSp_{test}}E(n,t), \label{def Ea}
\end{gather}
where $\parSp_{test}\subset [t_0,t_1]$ contains $100$ uniformly and independently sampled parameters from the time interval $[t_0,t_1]$. The average error defined as such will allows us to separately study the error in the interpolation $t\in [0,T]$ and the extrapolation $t > T$ regime. Recall that we expect DMD to perform poorly in the extrapolation regime---see \Cref{sec: standard DMD}. The following results will further corroborate our expectations.

\subsection{Test-1} We partition $\Omega$ into $4\times 10^3$ uniform elements, and collect snapshots at $500$ uniformly placed time instances inside $[0,0.8]$. We approximate the displacement field $\dev{\tref}{t}$ (defined in \eqref{split varphi}) in the polynomial space $\mcal P_{M=4}$, where the value of $M$ results from the procedure outlined in \Cref{sec: snapshots varphi}.
\subsubsection{Singular value decay} For further convenience, we recall that:
\begin{enumerate}
\item $\snapMat{U}{1}{K}$ contains the (untransformed) snapshots $\{\solFV{t}\}_{t\in\ttrain{train}}$;
\item $\snapMat{G}{1}{K}$ contains the transformed snapshots $\{g_N(\cdot,t)\}_{t\in\ttrain{train}}$;
\item $\snapMat{\td\Phi}{1}{K}$ contains the de-transformer's snapshots $\{\td\varphi(\cdot,t)\}_{t\in\ttrain{train}}$.
\end{enumerate}
For these three snapshot matrices, we compare the singular value decay. We denote the $n$-th singular value by $\sigma_n$, and study the scaled singular value $\sigma_n/\sigma_1$.

 \Cref{fig: test-1 sigma} compares the singular value decay. The matrix $\snapMat{U}{1}{K}$ has a slow singular value decay, indicating that a large number of POD modes (i.e., the value of $n$ in \eqref{approx u DMD}) might be required to well approximate the solution. This slow decay is triggered by the temporal discontinuities in the solution---see \Cref{fig: test-1 snapmat}. In contrast, as shown in \Cref{fig: test-1 snapmat calib}, spatial discontinuities in the transformed solution move very little in the time domain. This induces a fast singular value decay in the matrix $\snapMat{G}{1}{K}$ and improves the approximability of a transformed solution in the POD basis. Similarly, the de-transformer $\td\varphi(\cdot,t)$ appears to be smooth in the space-time domain, which results in a fast singular value decay in $\snapMat{\td\Phi}{1}{K}$---see \Cref{fig: test-1 snapmat phi}. Also note that $\td\varphi(\cdot,t)$ does not exhibit any steep gradients, which, as noted in \Cref{lemma: err bound}, is desirable. 
 
 Observe that the singular values of $\snapMat{G}{1}{K}$ appear to stagnate. The error introduced by this stagnation is of little practical relevance because it is of the same order of magnitude as the error introduced by the HF solver. We refer to \cite{SarnaCalib2020} for further details.

\begin{figure}[ht!]
\centering
\subfloat[Comparison of singular value decay]{\includegraphics[width=2.5in]{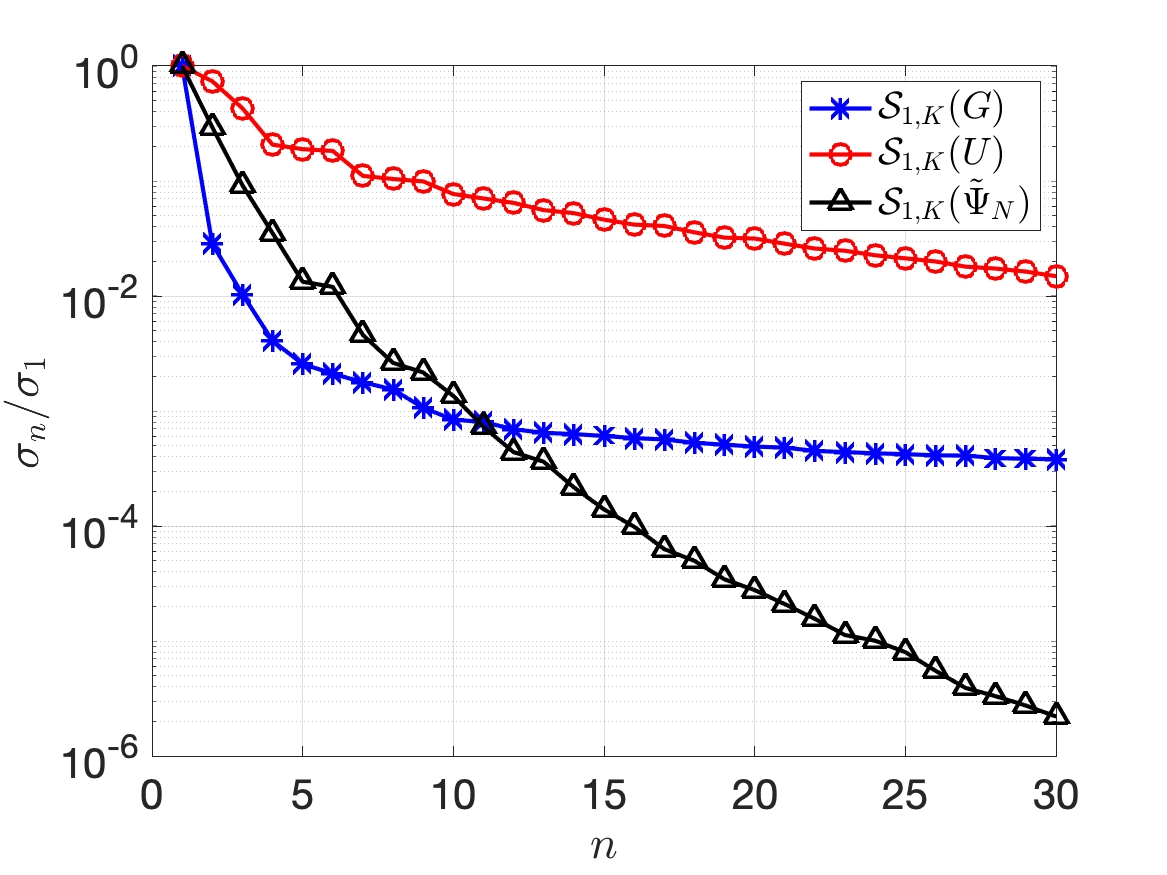}\label{fig: test-1 sigma}} 
\hfill
\subfloat[Solution $u_N(x,t)$]{\includegraphics[width=2.5in]{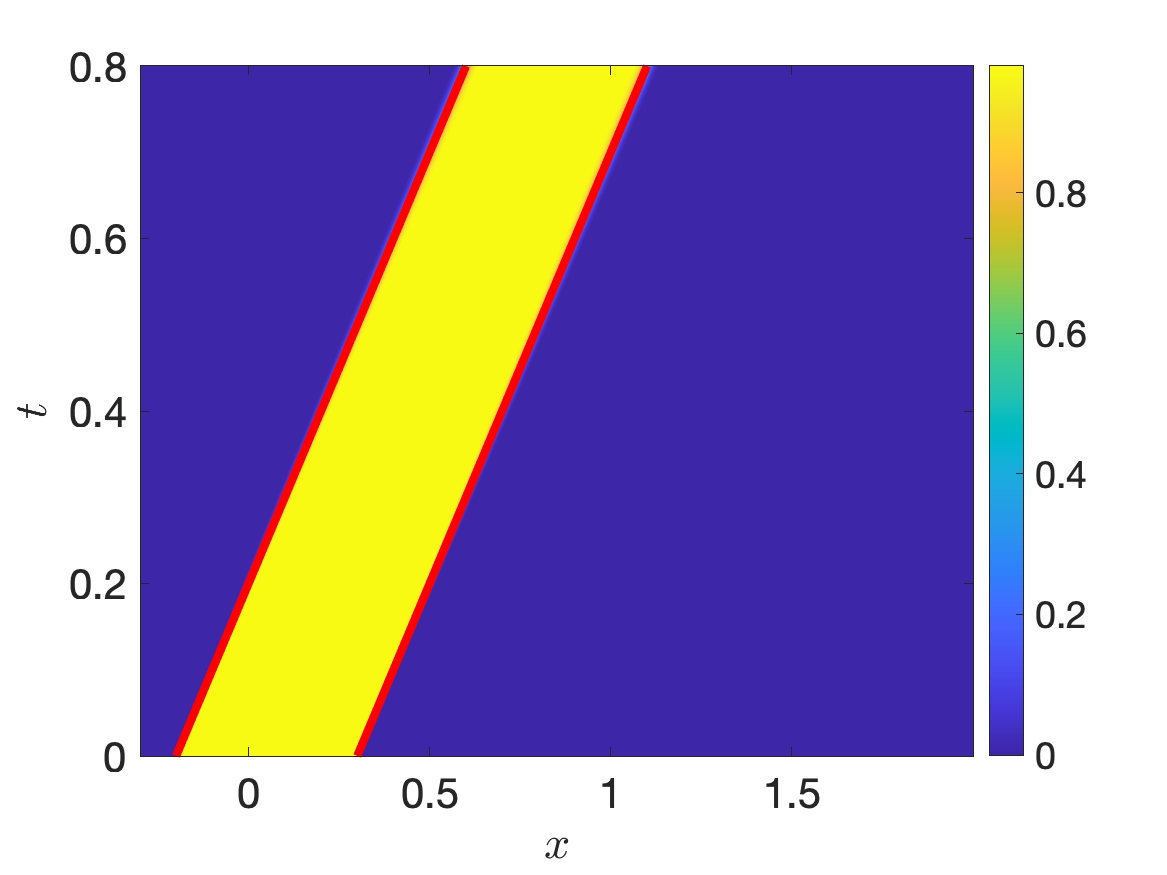}\label{fig: test-1 snapmat}}
\hfill
\subfloat[Transformed solution $g_N(x,t)$]{\includegraphics[width=2.5in]{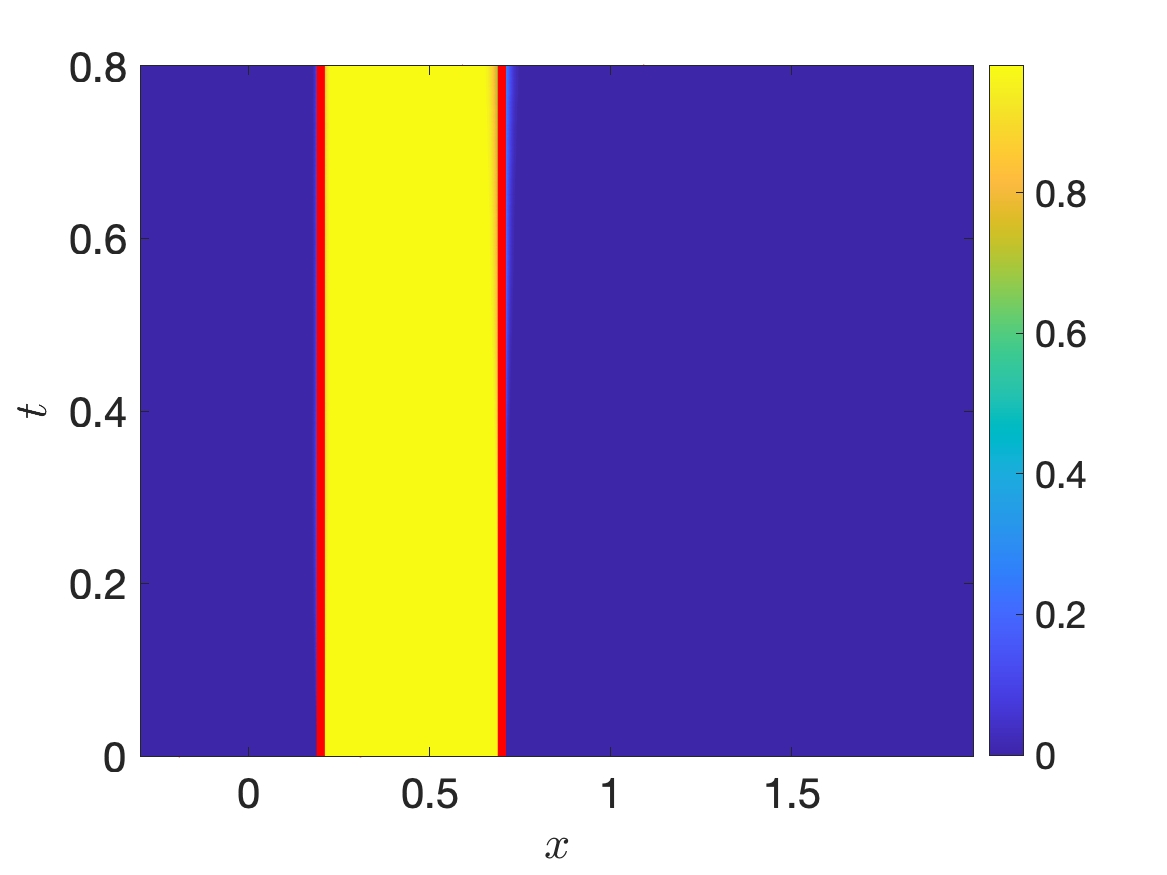}\label{fig: test-1 snapmat calib}}
\hfill
\subfloat[De-transformer $\td\varphi(\cdot,t)$]{\includegraphics[width=2.5in]{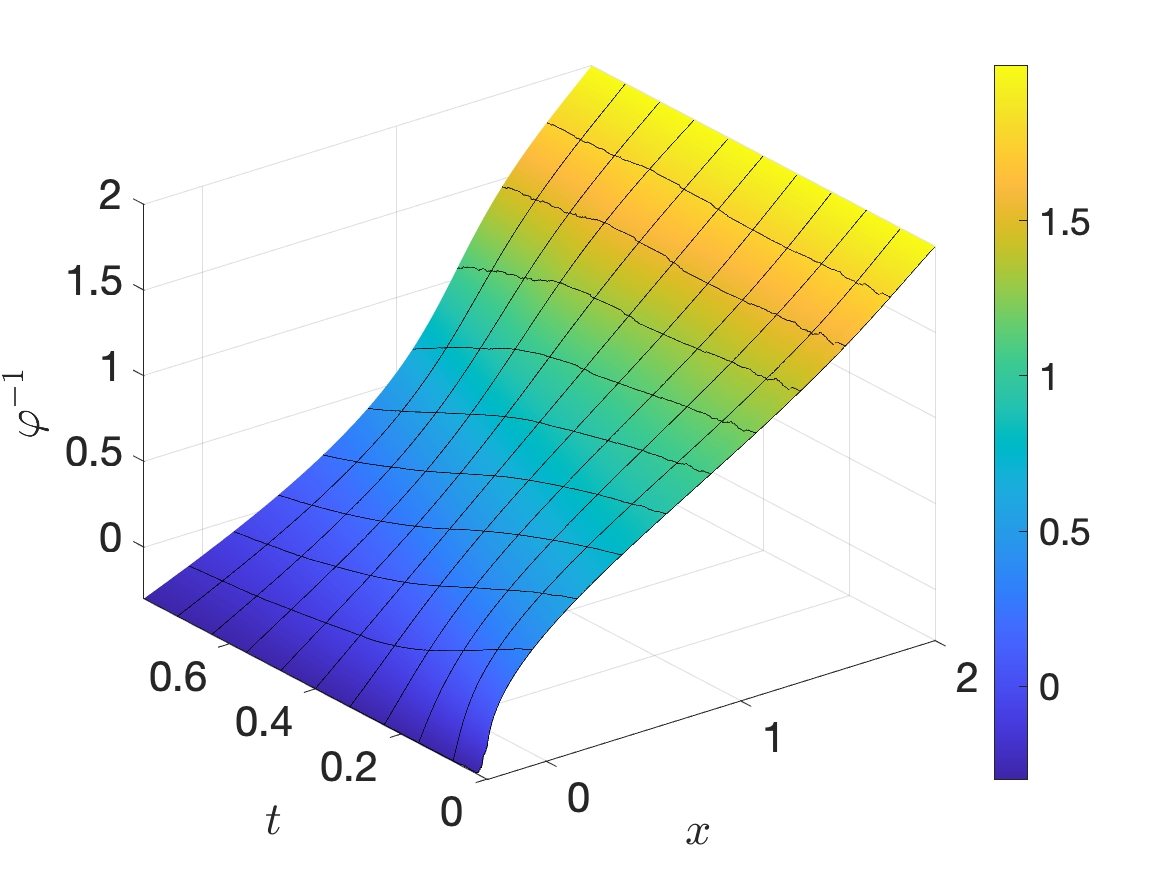}\label{fig: test-1 snapmat phi}}
\caption{Results for test--1. }	
\end{figure} 

\subsubsection{Approximation error} Following the discussion in \Cref{sec: standard DMD}, we study the approximation error separately for the interpolation $t\in [0,0.8]$ and extrapolation $t > 0.8$ regime. We first consider the interpolation regime. \Cref{fig: test-1 err intrp} compares the average error. TS--DMD outperforms DMD. The difference is the most pronounced for $n=11$, for which TS--DMD results in an error of $0.42\%$. Compare this to DMD which provides an error of $18.4\%$. For $n\geq 9$, the error from TS--DMD appears to stagnate because of the stagnation in the singular values reported earlier. 

We now consider the extrapolation regime $t > 0.8$---recall that we only collect snapshots inside $[0,0.8]$. \Cref{fig: test-1 err extrp} presents the average error. Error from DMD oscillates around $100\%$, and it appears that increasing the value of $n$ does not lead to convergence. TS--DMD performs much better. For all $n\geq 5$, it results in a maximum error of $9\%$, which is almost two orders of magnitude smaller than that resulting from DMD. 

Let us elaborate on the poor performance of DMD for the extrapolation regime---\Cref{sec: standard DMD} provides further elaboration. Ignoring the errors introduced by the HF solver, one can check that support of the solution $\solFV{t}$ is given via $K_t:=[-0.2 + t,0.3 +t]\subset \Omega$. Furthermore, the inner-product $\lan \solFV{t},\solFV{t^*}\ran_{L^2(\Omega)}$ reads $|K_t\cap K_{t^*}|$.  For $t > 0.8$, as the gap $t-0.8$ increases, the maximum overlap $\sup_{t^*\in [0,0.8]}|K_t\cap K_{t^*}|$ decreases. Consequently, in the $L^2$-sense, as time $t > 0.8$ increases, $\solFV{t}$ points more and more in a direction that is orthogonal to the previously collected snapshots. This decreases the approximability of $\solFV{t}$ in the span of snapshots collected from $[0,0.8]$ (and thus, also in the span of the POD modes) and thus, results in a poor DMD approximation. We emphasize that increasing the number of solution snapshots does not improve the accuracy. 

On the other hand, the support of the transformed solution $g_N(\cdot,t)$ is time-invariant---ignoring the slight discontinuity misalignment reported above---and is given by
$[-0.2 + \tref,0.3+\tref]$, where $\tref = 0.4$. As a result, even for $t > 0.8$, $g_N(\cdot,t)$ is well-approximable in the span of the transformed snapshots collected from $[0,0.8]$. Similar observation holds for the de-transformer $\td\varphi(\cdot,t)$. An accurate approximation of $g_N(\cdot,t)$ and $\td\varphi(\cdot,t)$, eventually, results in an accurate TS--DMD approximation of $u_N(\cdot,t)$. 

For $n=13$, time variation of the $L^1$ error is shown in \Cref{fig: test-1 err time}. At all times, the error from TS--DMD is almost an order of magnitude smaller than that from DMD. Observe that in the extrapolation regime---i.e., for $t > 0.8$---the error from both the methods grows monotonically with $t$; our findings here are consistent with that in \cite{Lu2020,lu2020prediction}. Nevertheless, TS--DMD provides a reasonable extrapolation with a maximum error of $16\%$. 

\begin{figure}[ht!]
\centering
\subfloat[Average error: Interpolation regime]{\includegraphics[width=2.5in]{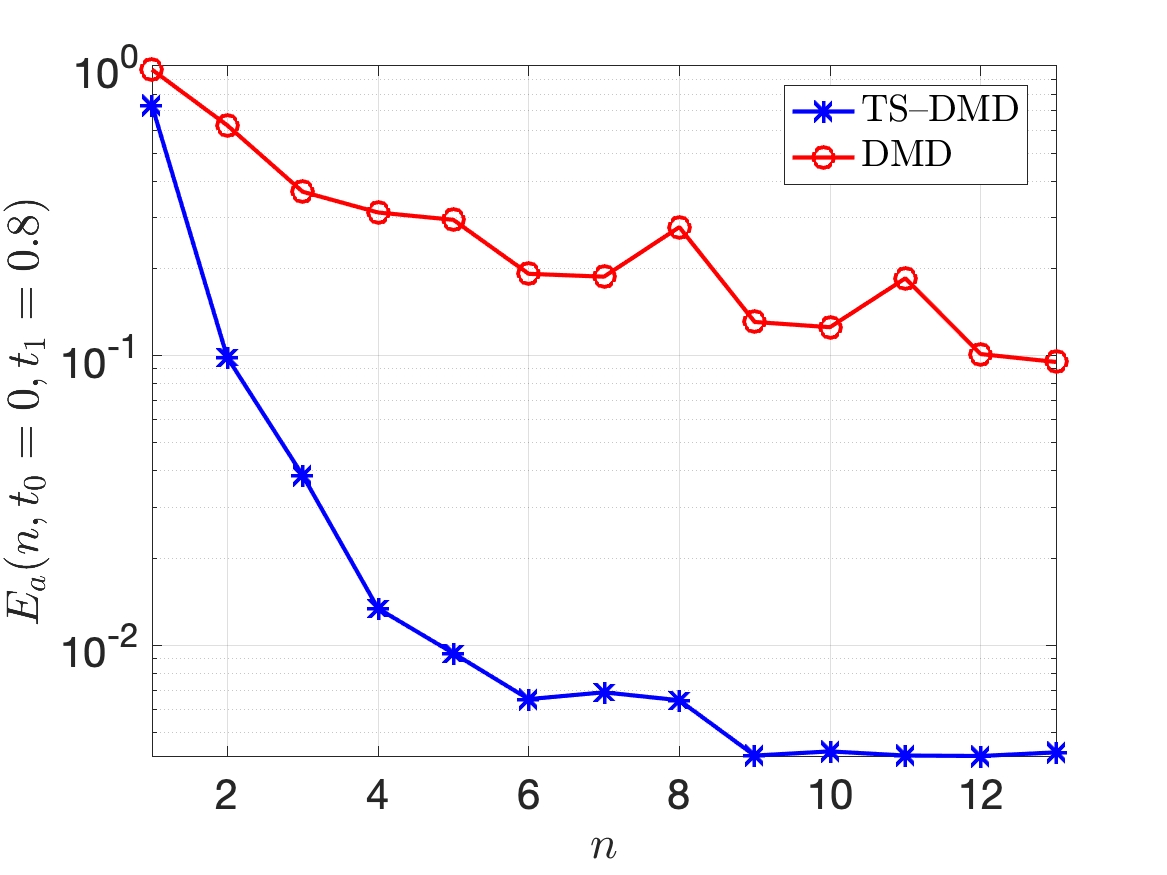}\label{fig: test-1 err intrp}} 
\hfill
\subfloat[Average error: Extrapolation regime]{\includegraphics[width=2.5in]{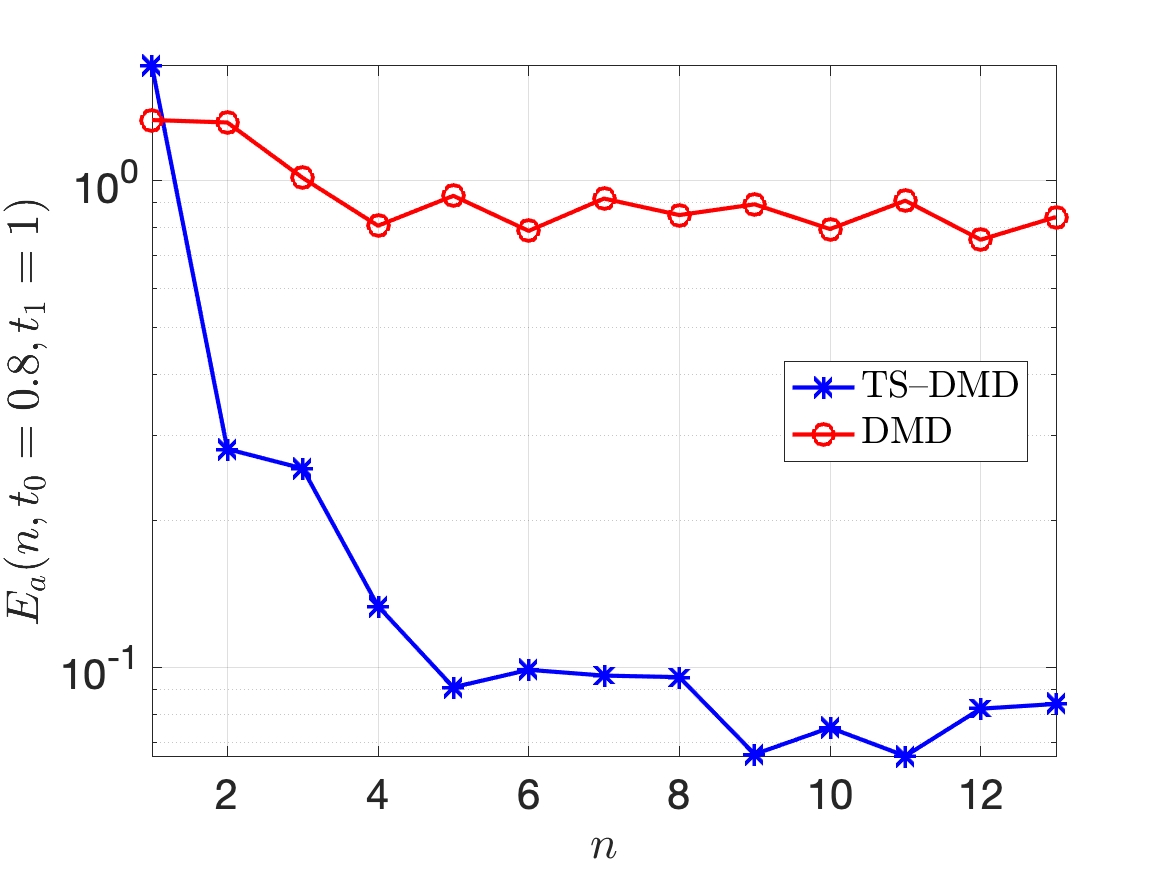}\label{fig: test-1 err extrp}} 
\caption{Results for test-1. }	
\end{figure} 

\begin{figure}[ht!]
\centering
\subfloat[Temporal behaviour of the $L^1$ error: $n=13$]{\includegraphics[width=2.5in]{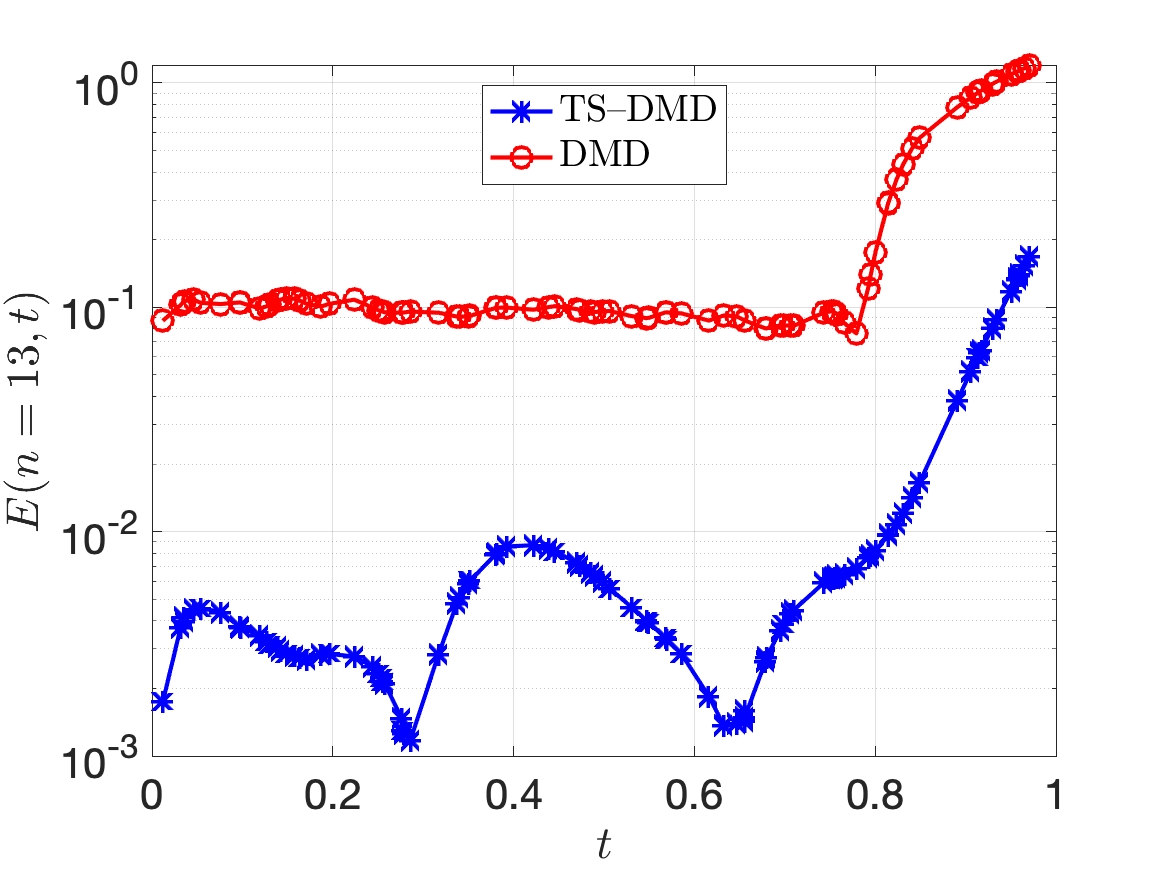}\label{fig: test-1 err time}} 
\hfill
\subfloat[Speed-up vs. error]{\includegraphics[width=2.5in]{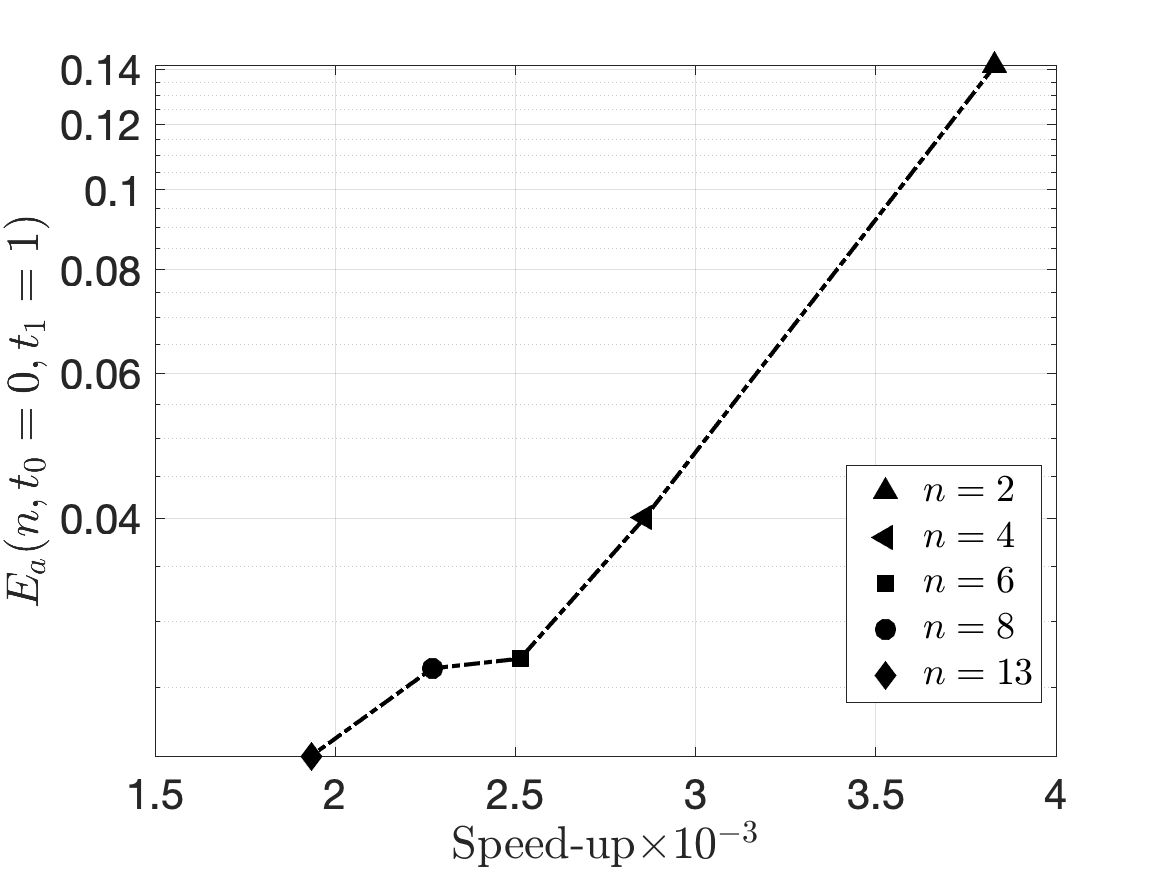}\label{fig: test-1 speedup}}
\caption{Results for test-1.}	
\end{figure} 

\subsubsection{Solution comparison}
A major drawback of the DMD approach is its highly oscillatory solution---see \Cref{fig: test-1 sol comp}. These oscillations are triggered by the temporal discontinuities in the solution and get stronger inside the extrapolation regime. In contrast, the TS--DMD solution exhibits no such oscillations. Note that oscillations in a ROM for advection-dominated problems are not just limited to the DMD approach. Indeed, as explained earlier, they are a shortcoming of using a linear reduced space and for that matter, also the works in \cite{Fresca2020,GNAT,Lu2020,MojganiLagrangian} report similar oscillations.

\begin{figure}[ht!]
\centering
\subfloat[TS-DMD solution: $n=13$]{\includegraphics[width=2.5in]{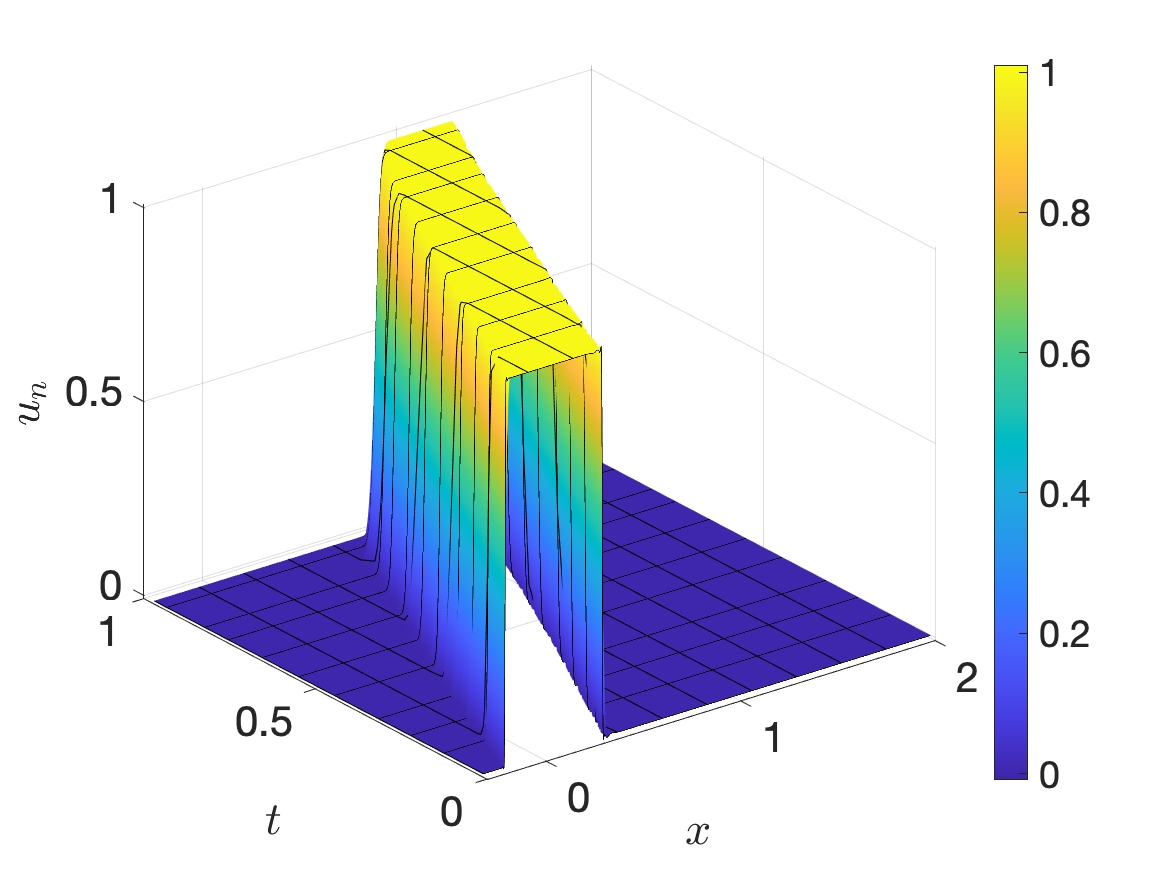}}
\hfill
\subfloat[DMD solution: $n=13$]{\includegraphics[width=2.5in]{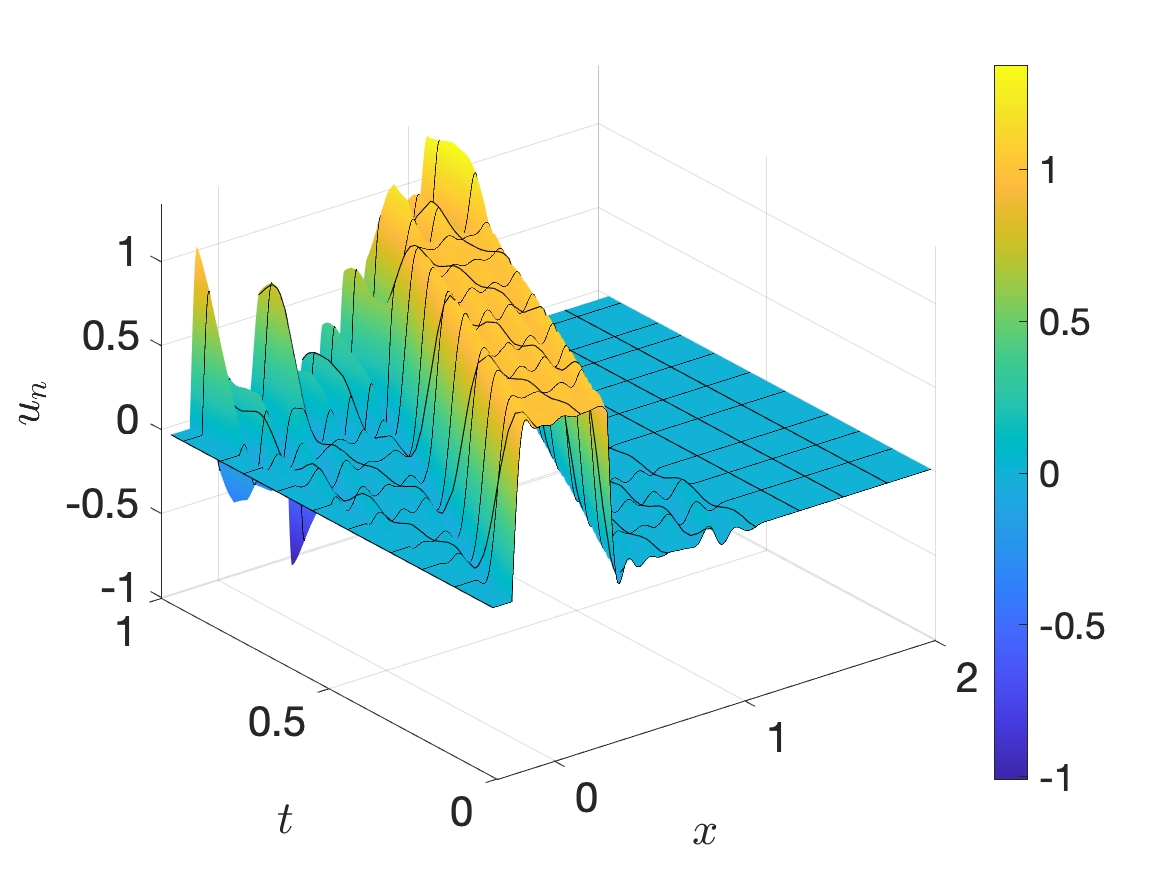}}
\hfill
\subfloat[HF solution]{\includegraphics[width=2.5in]{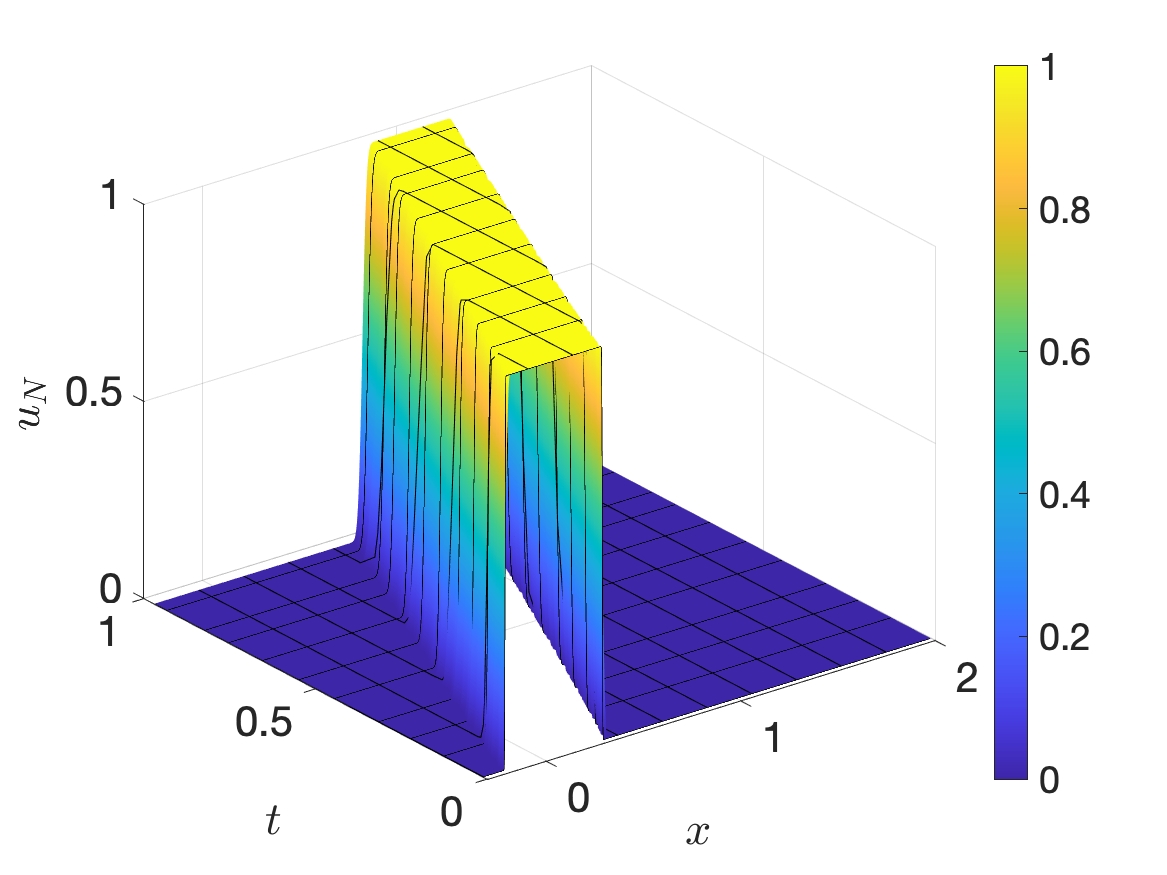}}
\caption{Results for test-1. \label{fig: test-1 sol comp}}	
\end{figure}

\subsubsection{Study of $\td\varphi_n$} In \Cref{lemma: err bound}, we assumed that the ROM $\td\varphi_n(\cdot,t)$ for $\td\varphi(\cdot,t)$ is a homeomorphism. Here, we justify this assumption empirically. We first define a Jacobian 
$$
\td{\mcal J}_n(x,t) := \pd_x \td\varphi_n(x,t),
$$
where we interpret the derivative in a weak sense.
For all $t\in [0,T]$, we want the Jacobian to be positive. Since, for all $t\in [0,T]$, $\td\varphi_n(\pd\Omega,t) = \pd\Omega$, this would then imply that $\td\varphi_n(\cdot,t)$ is a homeomorphism \cite{RegisterMOR}. We restrict our study to two limiting values of $n$: $n = 1$ and $n=10$. Qualitatively, the results remain similar for all the other intermediate values. We study the Jacobian at $100$ uniformly sampled points from $[0,T]$.

For $n=1$, \Cref{fig: test-1 phi1} plots $\td\varphi_n$ over the space-time domain.  With little deviations, $\td\varphi_n$ is almost the same as the identity mapping. Furthermore, the minimum value of the Jacobian $\inf_{x\in\Omega}\td{\mcal J}_n(x,t)$ stays well above zero, which is desirable---see \Cref{fig: test-1 jacob phi1}. Note that because the value of $n$ is small, $\td\varphi_n$ is a crude approximation of the true $\td\varphi(\cdot,t)$ shown in \Cref{fig: test-1 snapmat phi}. Nevertheless, increasing $n$ to $10$, improves the approximation quality---see \Cref{fig: test-1 phi10}.  Also, as shown in \Cref{fig: test-1 jacob phi10}, for $n=10$, the Jacobian is still positive. 

\begin{figure}[ht!]
\centering
\subfloat[$\td\varphi_n$ for $n=1$]{\includegraphics[width=2.5in]{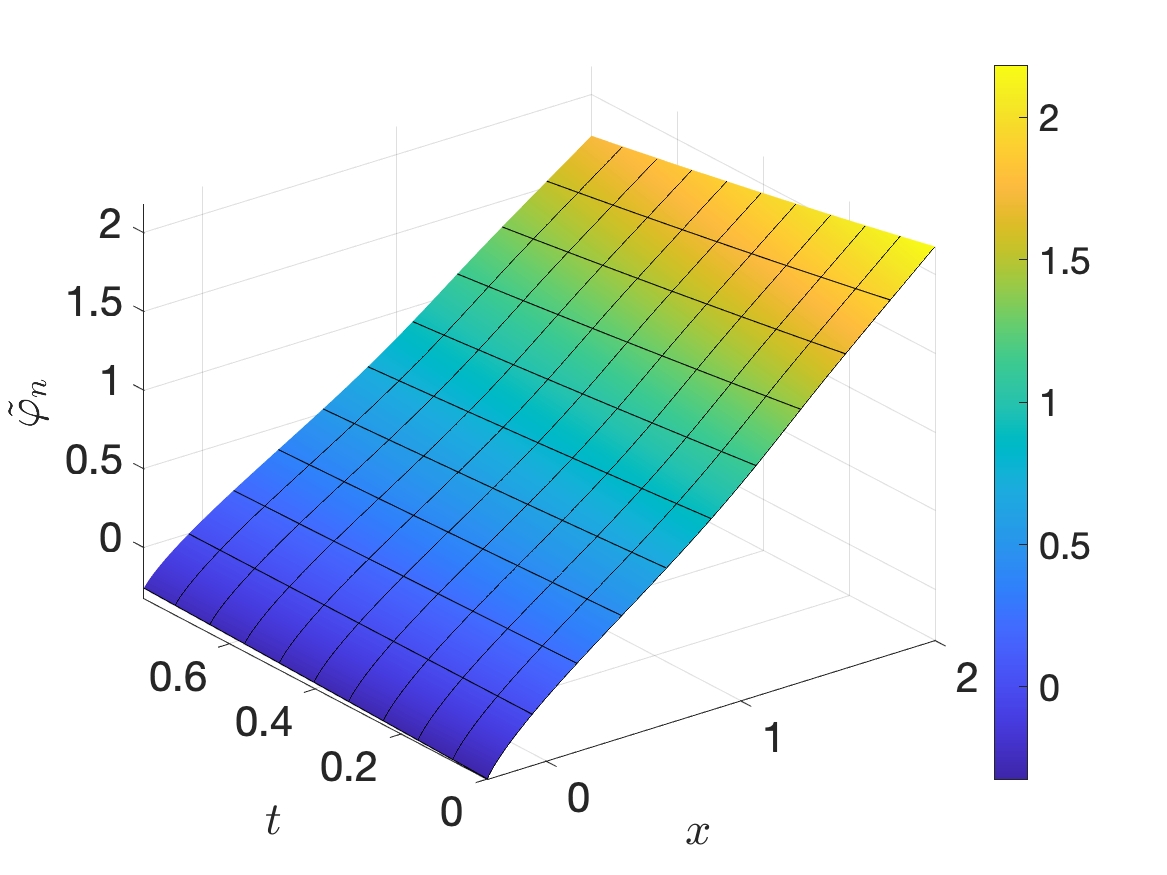}\label{fig: test-1 phi1}}
\hfill
\subfloat[Minimum value of the Jacobian for $n=1$]{\includegraphics[width=2.5in]{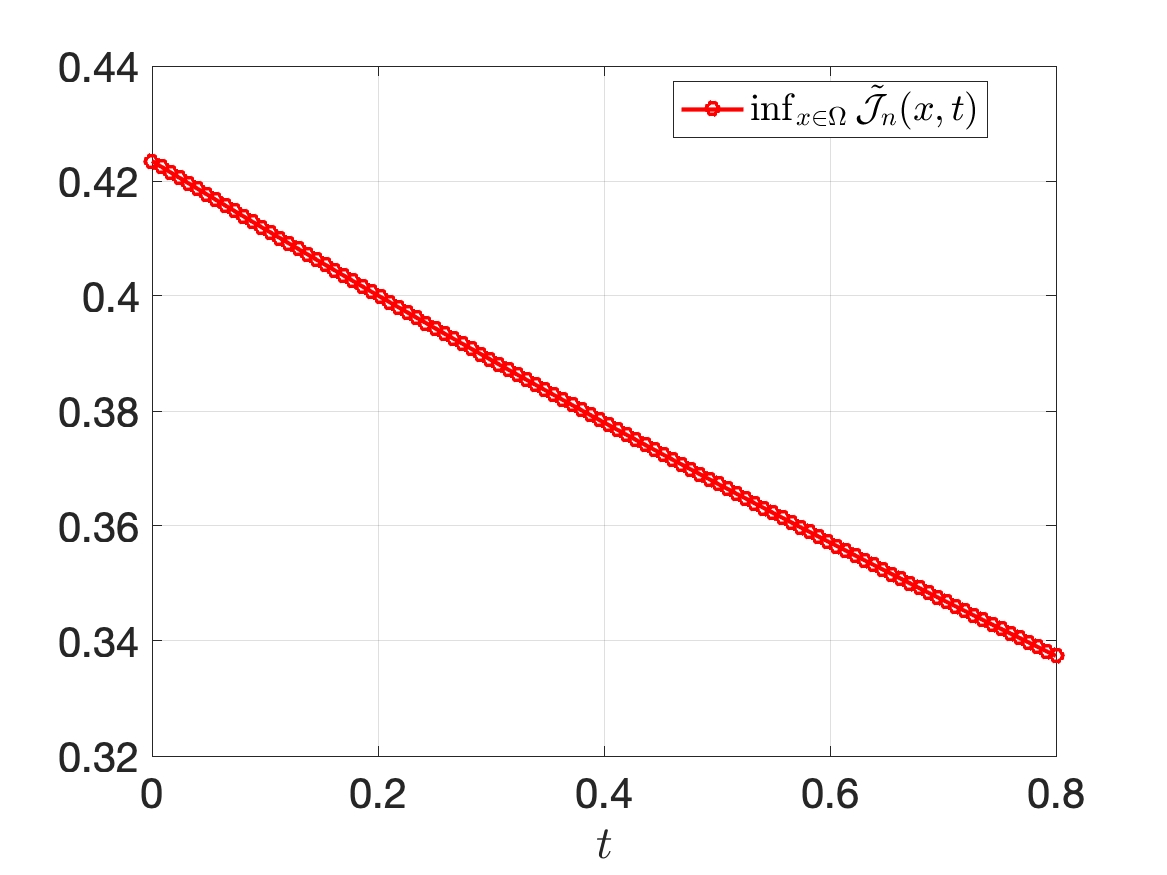}\label{fig: test-1 jacob phi1}}
\hfill
\subfloat[$\td\varphi_n$ for $n=10$]{\includegraphics[width=2.5in]{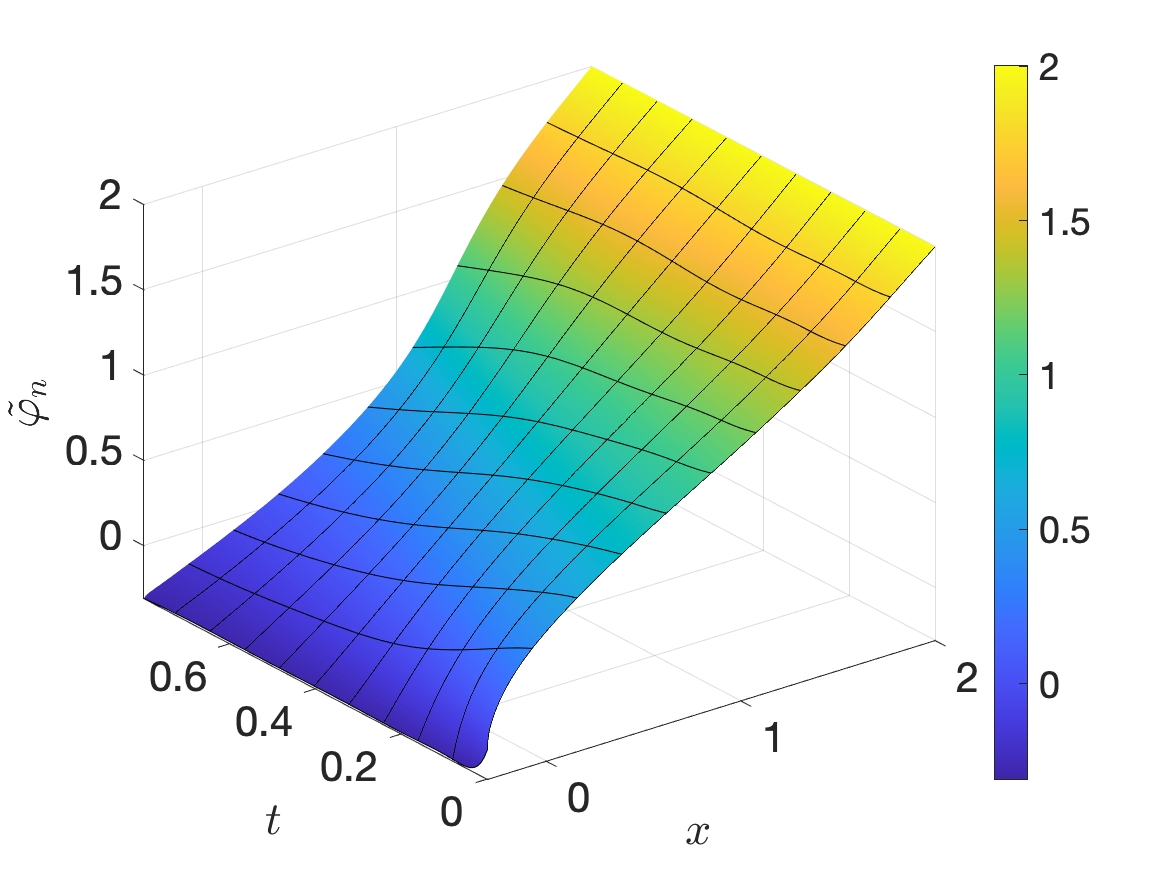}\label{fig: test-1 phi10}}
\hfill
\subfloat[Minimum value of the Jacobian for $n=10$]{\includegraphics[width=2.5in]{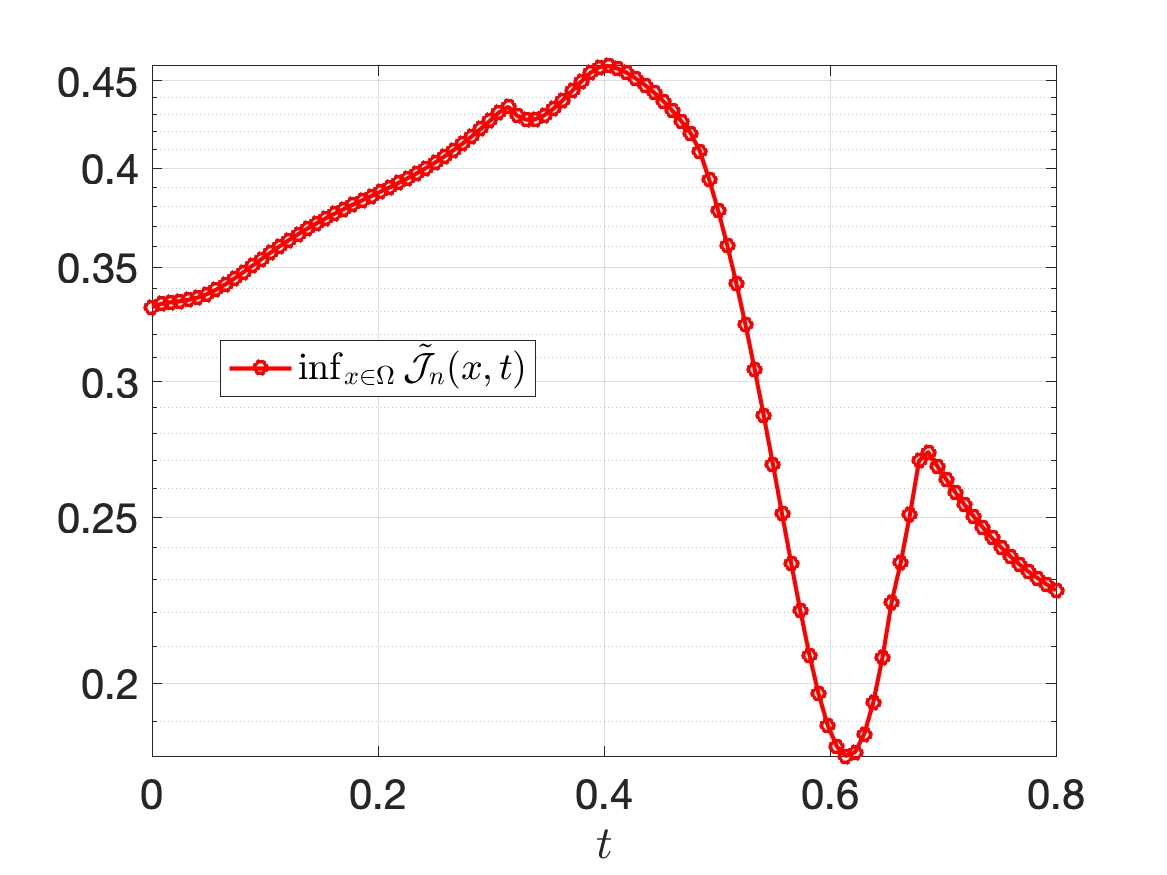}\label{fig: test-1 jacob phi10}}
\caption{Results for test-1.}	
\end{figure}

\subsubsection{Speed-up vs. accuracy} We study the speed-up offered by the TS--DMD approach. We define the speed-up as 
\begin{gather}
\kappa := \frac{\sum_{t\in \parSp_{test}}\tau_{HF}(t)}{\sum_{t\in\parSp_{test}}\tau_{ROM}(t)},\label{def kappa}
\end{gather}
where $\tau_{ROM}(t)$ and $\tau_{HF}(t)$ denote the CPU-time required by the TS--DMD and HF solver, respectively, to compute a solution at time $t\in [0,\infty)$. The set $\parSp_{test}$ contains $100$ uniformly and independently sampled points from $[0,1]$. The CPU-time is computed using the \texttt{tic-toc} function of MATLAB.

Recall that TS--DMD is an equation-free approach. To compute a solution at any $t\in [0,\infty)$, we just need to compute the sum in \eqref{rom DMD}. The HF solver, on the other hand, depending upon the value of $t$, requires some finite number of time steps to compute the solution. Therefore, we expect TS--DMD to be much more efficient than the HF solver. 

\Cref{fig: test-1 speedup} presents the results. For clarity, we plot only a handful of values. As anticipated, in general, both the error and speed-up decrease upon increasing $n$. Furthermore, the speed-up stays well above $10^3$. The minimum speed-up of $1.9\times 10^{3}$ results from $n=13$ and corresponds to an average error of $1.7\%$.

\subsection{Test-2} We discretize $\Omega$ with $4\times 10^3$ uniform elements, and collect snapshots at $500$ uniformly placed time instances inside $[0,0.6]$. We approximate $\dev{\tref}{t}$ in the polynomial space $\mcal P_{M=5}$, where the value of $M$ results from the procedure outlined in \Cref{sec: snapshots varphi}. We report the results for the first solution component $u_1$. Results for $u_2$ are similar and not discussed for brevity.

\subsubsection{Singular value decay} 
\Cref{fig: test-2 sigma} compares the singular value decay for the three snapshots matrix described earlier. Similar to the previous test case, because of the temporal discontinuities in the solution, singular values of the matrix $\snapMat{U}{1}{K}$ decay slowly. This is indicative of the poor approximability of the solution in a linear reduced space. Comparatively, the snapshot matrices $\snapMat{G}{1}{K}$ and $\snapMat{\td\Phi}{1}{K}$ exhibit a fast singular value decay. For instance, for $n=4$, we find
\begin{equation}
\begin{gathered}
\snapMat{U}{1}{K}:\hsp \frac{\sigma_n}{\sigma_1} = 14.7\times 10^{-2},\hspB \snapMat{G}{1}{K}:\hsp \frac{\sigma_n}{\sigma_1} = 0.3\times 10^{-2},\\
\snapMat{\td\Phi}{1}{K}:\hsp \frac{\sigma_n}{\sigma_1} = 1.3\times 10^{-2}.
\end{gathered}
\end{equation}
Our results indicate that it is better to first approximate the transformed solution and the de-transformer in the POD basis, followed by a re-composition based approximation for the solution. The average approximation errors reported below will further corroborate our claim.
\begin{figure}[ht!]
\centering
\subfloat[]{\includegraphics[width=2.5in]{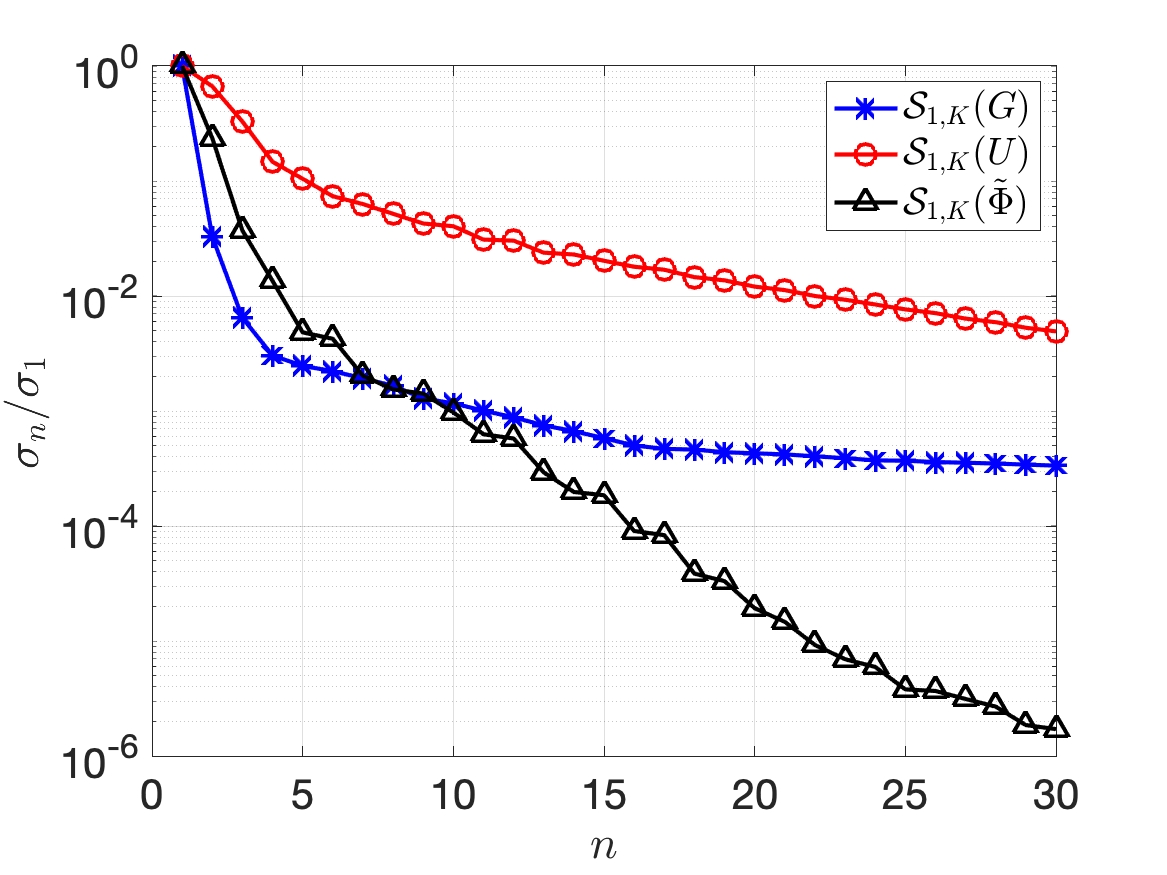}\label{fig: test-2 sigma}} 
\caption{Results for test-2. Comparison of singular value decay.}	
\end{figure} 

\subsubsection{Approximation error} 
\Cref{fig: test-2 err intrp} presents the average error in the interpolation regime $t\in [0,0.6]$. Our observations remain similar to the previous test case. The TS--DMD technique outperforms DMD. For $n=10$, it results in an error of $0.35\%$, which is almost fifteen times smaller than the error of $5.2\%$ resulting from the DMD approach. Notice that because of the stagnation in the singular value decay reported earlier, the error from TS-DMD stagnates for $n \gtrsim 6$. 

As one might anticipate from our earlier discussion, DMD performs poorly in the extrapolation regime $t\in [0.6,0.7]$---see \Cref{fig: test-2 err extrp}. For $n=10$, it results in an error of $\approx 27\%$, which is almost ten times larger than the error of $2.6\%$ resulting from TS--DMD. Same as the interpolation regime, the error from TS--DMD almost stagnates for $n\gtrsim 7$. 

For $n=10$, time variation of the relative $L^1$ error is shown in \Cref{fig: test-2 err time}. For both the methods, in the extrapolation regime, the error increases monotonically with time. Furthermore, as compared to DMD, at all time instances, TS--DMD provides an error that is at least an order of magnitude smaller. At the final time $t=0.7$, both the methods result in the maximum error with the error being $4.4\%$ and $37.1\%$ for TS--DMD and DMD, respectively.

\begin{figure}[ht!]
\centering
\subfloat[Average error: interpolation regime]{\includegraphics[width=2.5in]{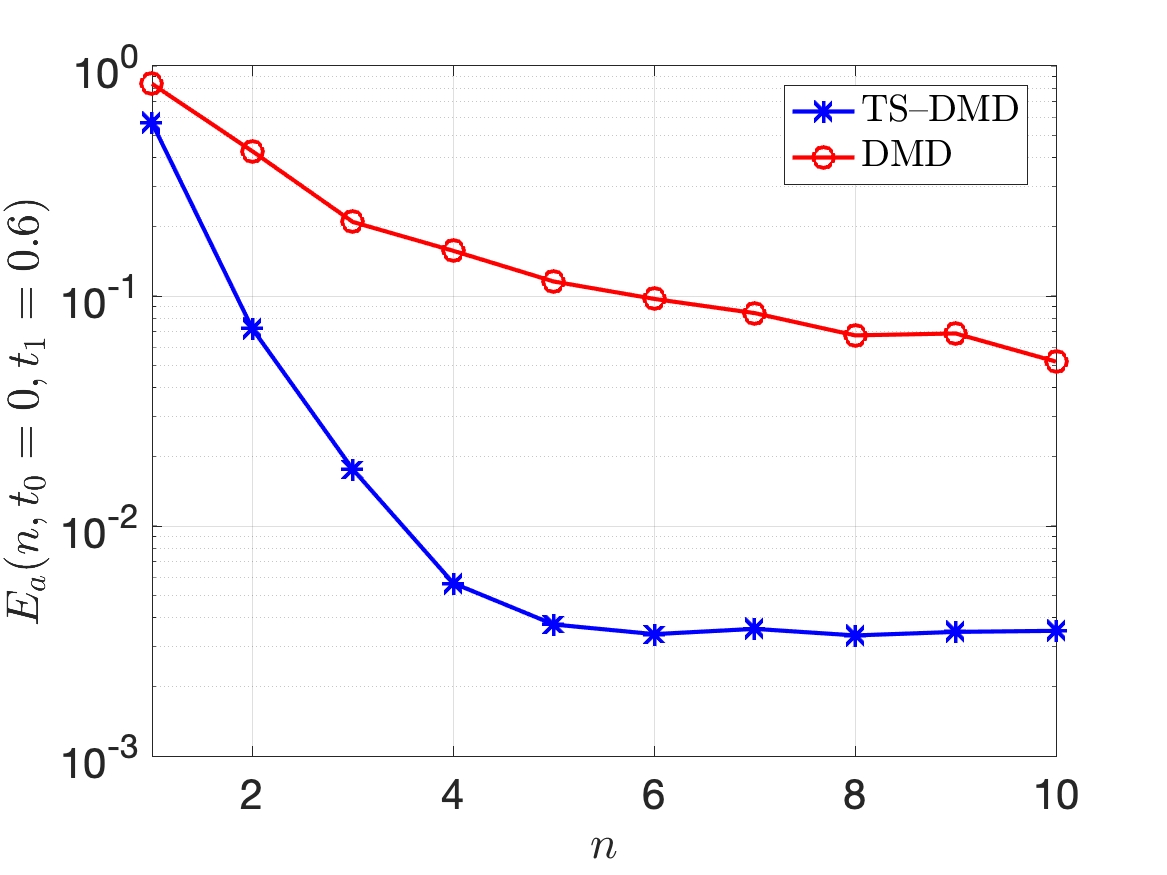}\label{fig: test-2 err intrp}} 
\hfill
\subfloat[Average error: extrapolation regime]{\includegraphics[width=2.5in]{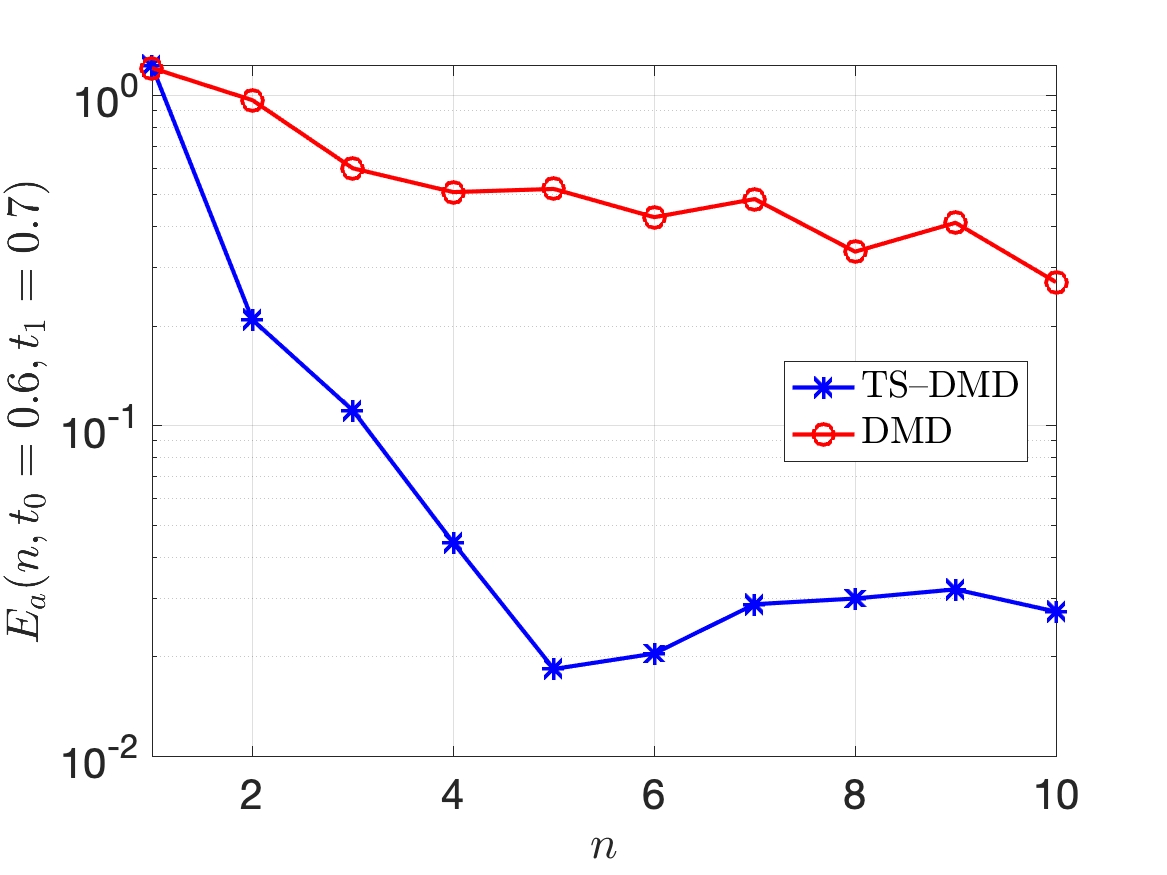}\label{fig: test-2 err extrp}} 
\caption{Results for test-2. }	
\end{figure} 

\begin{figure}[ht!]
\centering
\subfloat[Temporal behaviour of the $L^1$ error: $n=10$]{\includegraphics[width=2.5in]{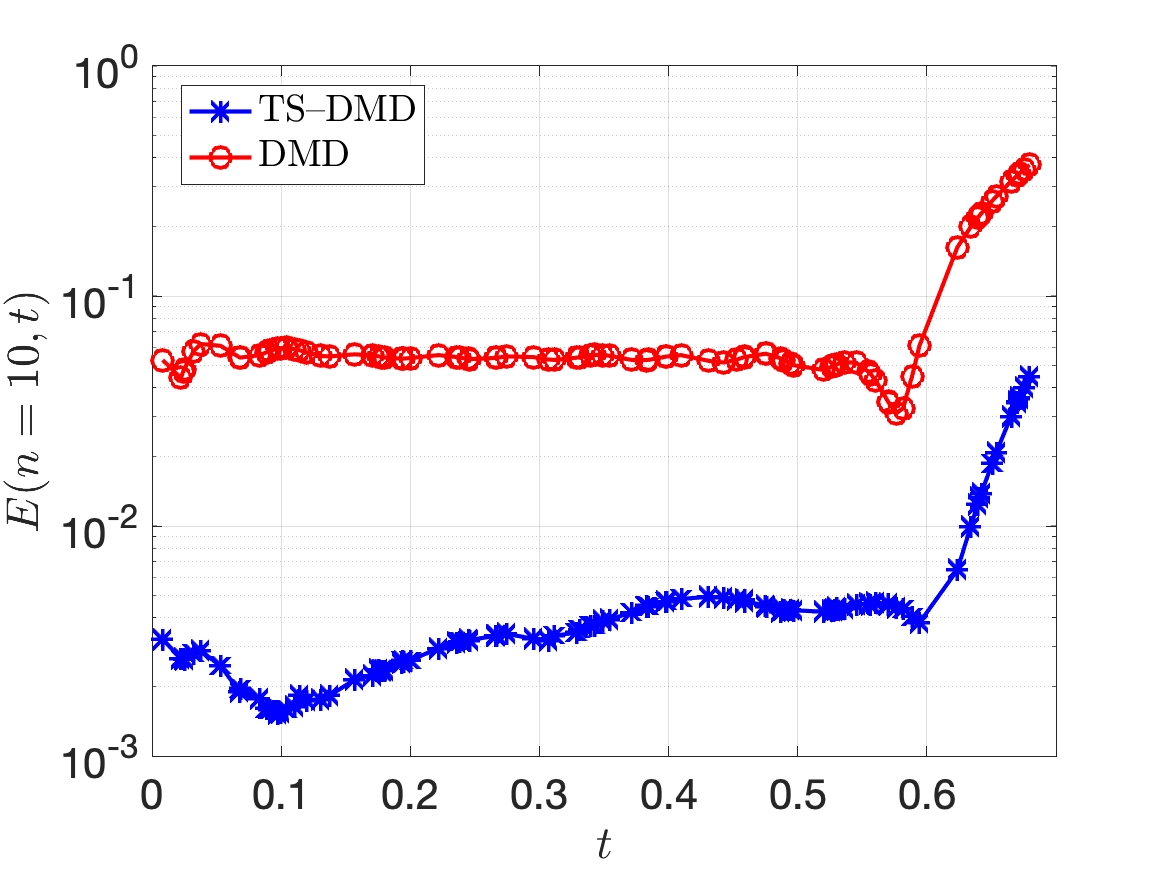}\label{fig: test-2 err time}}
\hfill
\subfloat[Speed-up vs. accuracy]{\includegraphics[width=2.5in]{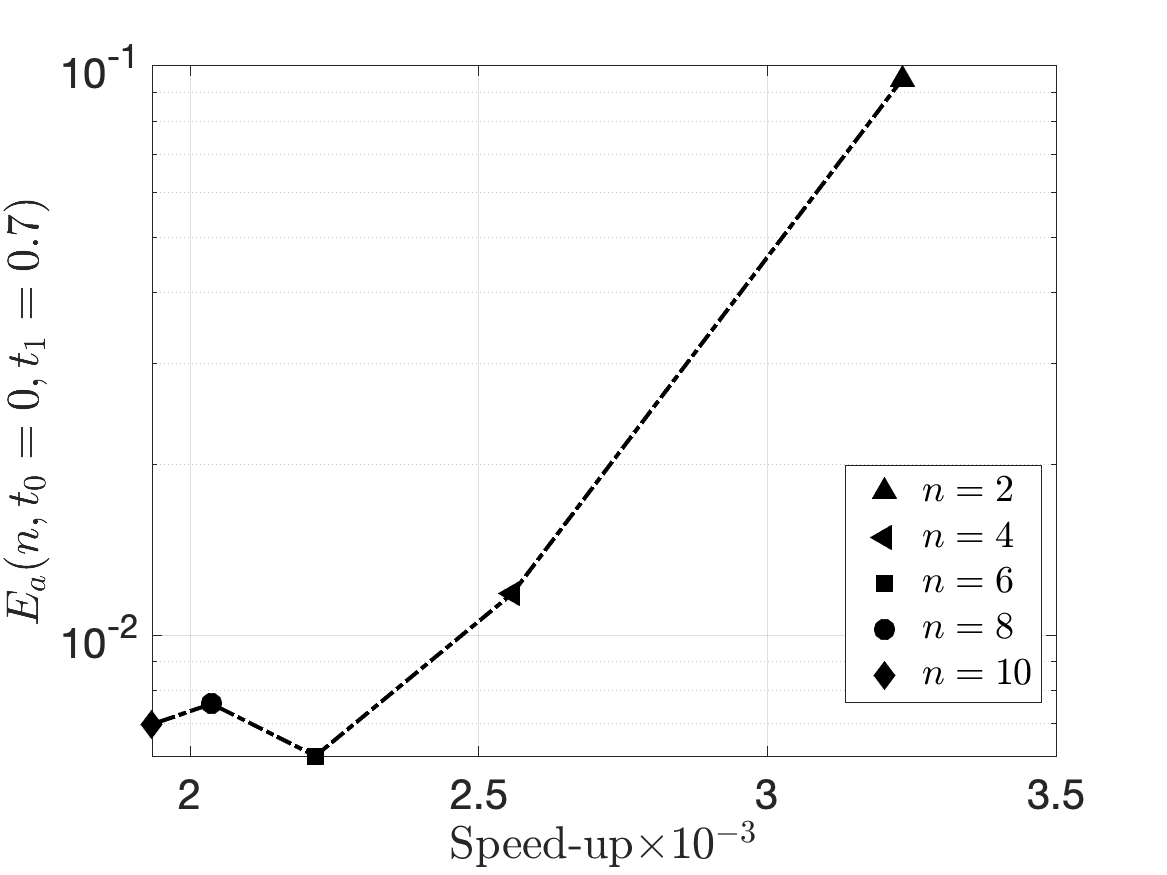}\label{fig: test-2 speedup}}
\caption{Results for test-2.}
\end{figure} 

\subsubsection{Solution comparison}
For $n=5$, \Cref{fig: test-2 sol comp} compares the different solutions. As expected, DMD provides an oscillatory solution. These oscillations are particularly strong in the extrapolation regime $t > 0.6$. Notice that in the extrapolation regime, DMD largely misrepresents the solution. It gets both the shape of the sin bump and the location of discontinuities wrong. Furthermore, it exhibits oscillations outside of the support of the HF solution. In contrast, TS--DMD accurately approximates the solution both in the interpolation and the extrapolation regime. In the \textit{eye-ball} norm, it is almost indistinguishable from the HF solution. Upon a closer inspection, next to the spatial discontinuities, one can find minor over and undershoots in the TS--DMD solution. These artifacts are a result of the minor misalignment in discontinuities reported earlier. 

\begin{figure}[ht!]
\centering
\subfloat[TS-DMD solution: $n=5$]{\includegraphics[width=2.5in]{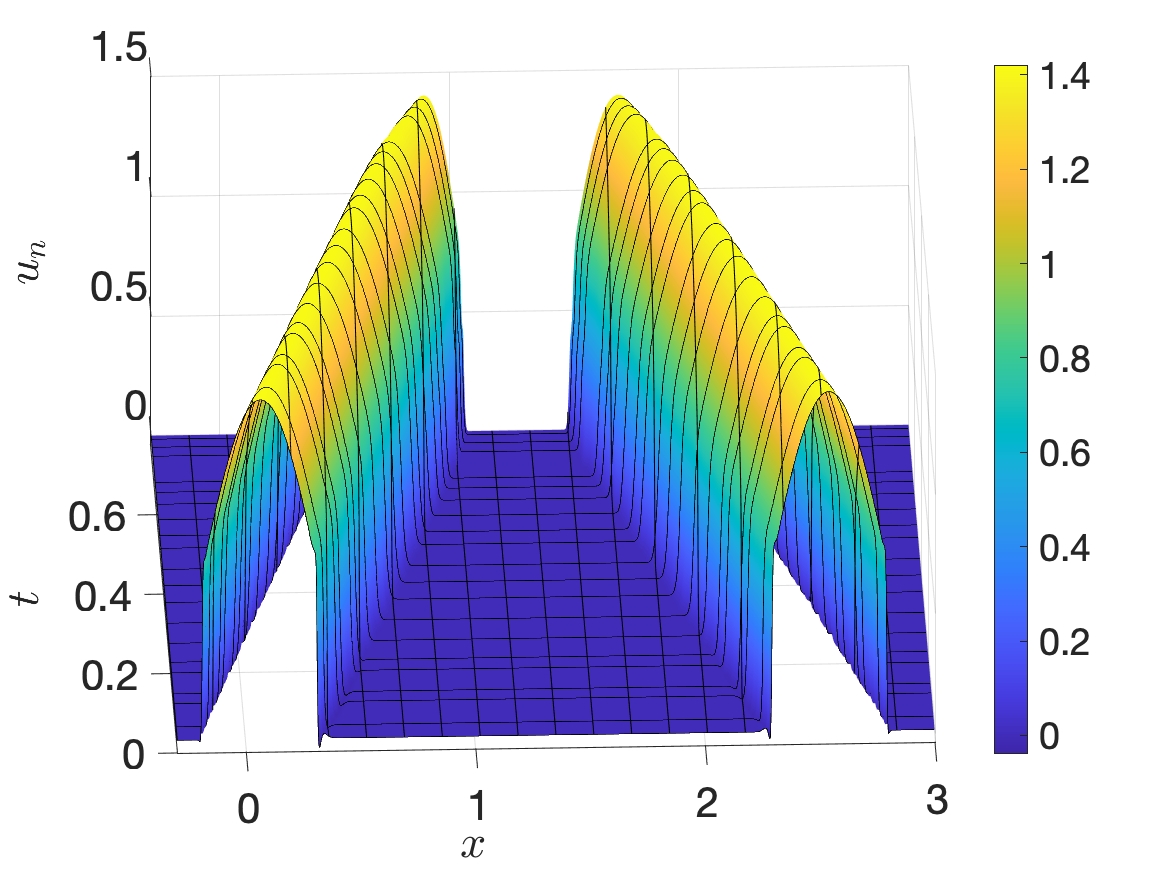}}
\hfill
\subfloat[DMD solution: $n=5$]{\includegraphics[width=2.5in]{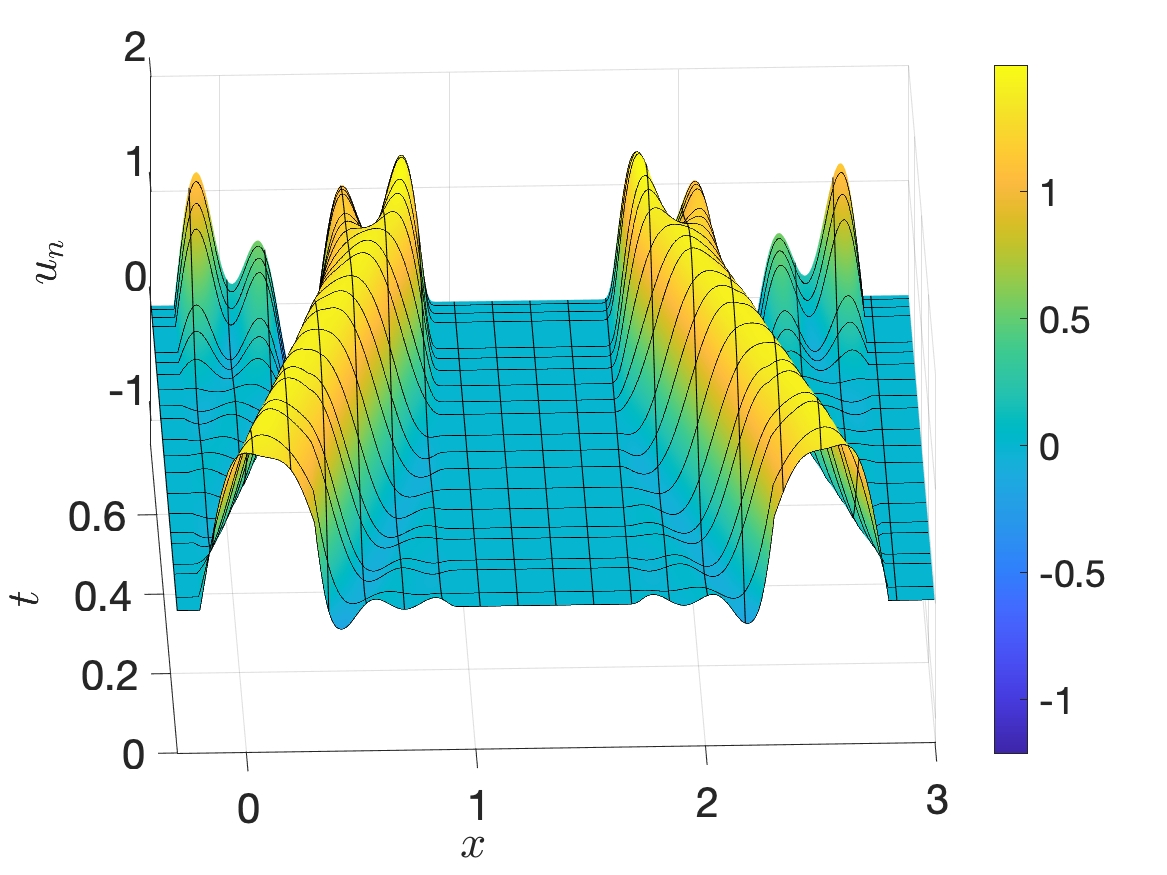}}
\hfill
\subfloat[HF solution]{\includegraphics[width=2.5in]{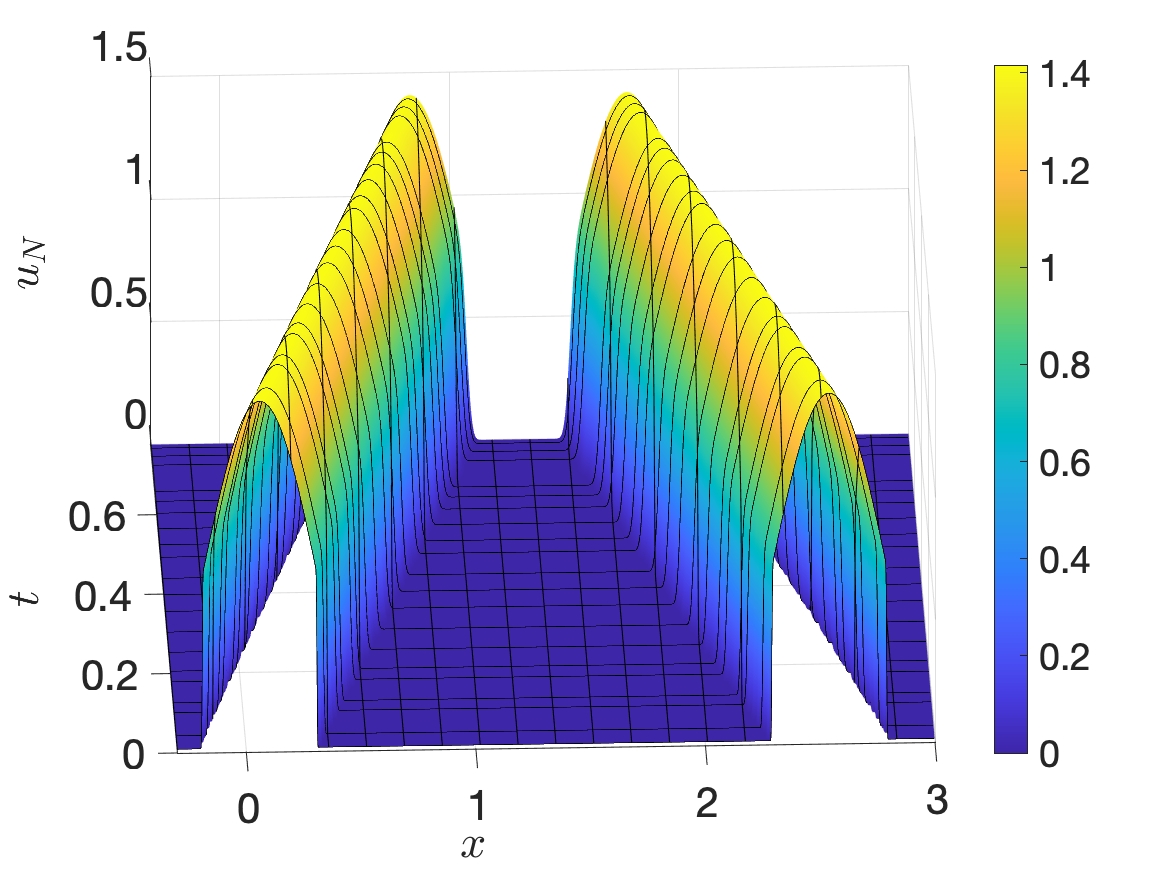}}
\caption{Results for test-1. \label{fig: test-2 sol comp}}	
\end{figure}

\subsubsection{Speed-up vs. accuracy} 
\Cref{fig: test-2 speedup} compares the speed-up defined in \eqref{def kappa} to the average error $E_a(n,t_0,t_1)$. Our findings remain similar to the previous test case. In general, both the error and the speed-up decrease upon increasing $n$. For $n=10$, we recorded a minimum speed-up of $1.9\times 10^{3}$ with a corresponding average error of $0.6\%$.

\subsection{Test-3}
We discretize $\Omega$ with $300\times 300$ grid cells, and collect snapshots at $50$ uniformly placed temporal points inside $[0,0.8]$. Note that we collect much fewer snapshots than in the previous two test cases. For multi-dimensional problems, snapshot collection is usually expensive and we demonstrate that our technique works well with fewer snapshots. We approximate the displacement field $\dev{1}{t}$ in $\mcal P_{M=6}$.
\subsubsection{Singular value decay}
\Cref{fig: test-3 singular value} compares the singular value decay of the three matrices: $\snapMat{U}{1}{K}$, $\snapMat{G}{1}{K}$ and $\snapMat{\td\Phi}{1}{K}$. Our observations are similar to the previous test cases. Singular values of the matrix $\snapMat{U}{1}{K}$ decay slowly, indicating that the solution has poor approximability in the POD basis. Comparatively, the singular values of $\snapMat{G}{1}{K}$ and $\snapMat{\td\Phi}{1}{K}$ decay faster. For instance, consider $n=4$, for which we find 
\begin{equation}
\begin{gathered}
\snapMat{U}{1}{K}:\hsp \frac{\sigma_n}{\sigma_1} = 18.8\times 10^{-2},\hspB \snapMat{G}{1}{K}:\hsp \frac{\sigma_n}{\sigma_1} = 4.7\times 10^{-2},\\
\snapMat{\td\Phi}{1}{K}:\hsp \frac{\sigma_n}{\sigma_1} = 3.5\times 10^{-3}.
\end{gathered}
\end{equation}
Notice that the singular value of $\snapMat{G}{1}{K}$ and $\snapMat{\td\Phi}{1}{K}$ is almost four and seventy times smaller, respectively, to that of $\snapMat{U}{1}{K}$. 
\begin{figure}[ht!]
\centering
\includegraphics[width=2.5in]{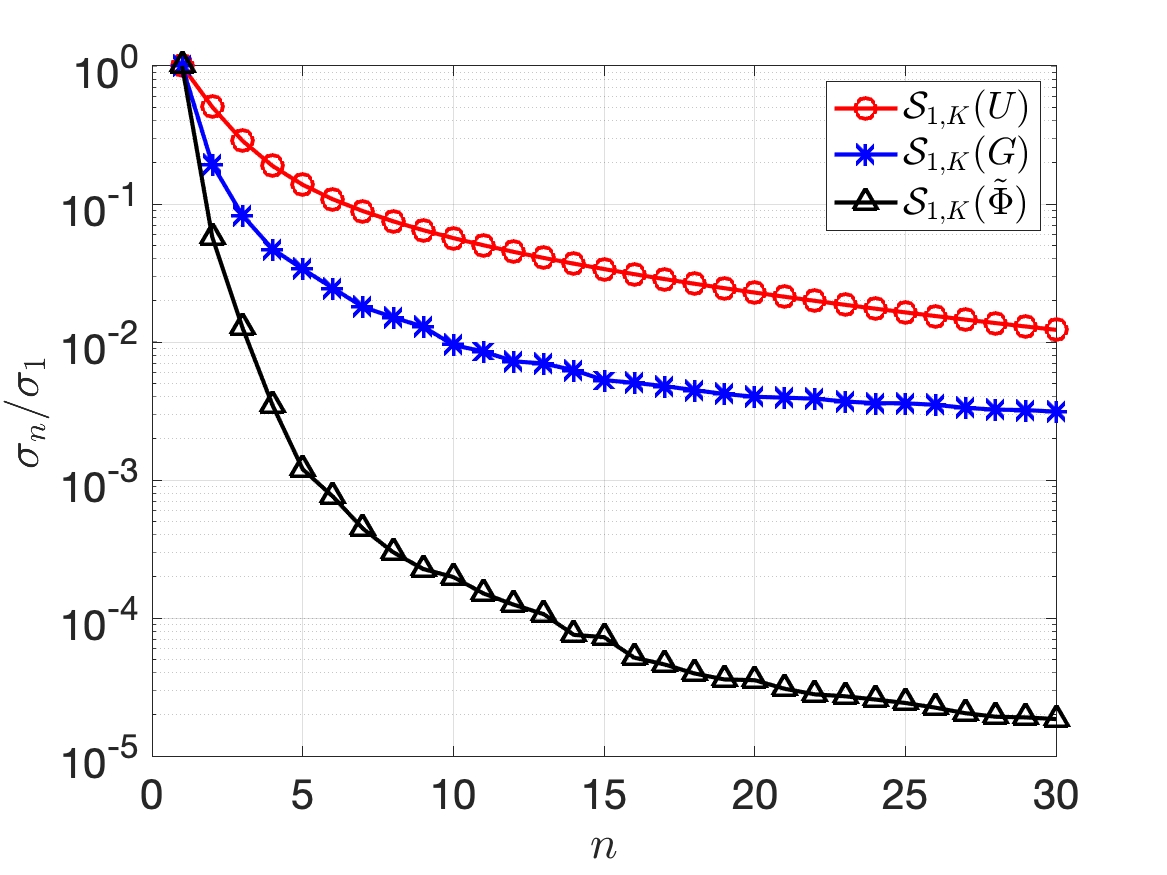}
\caption{Results for test-3. Comparison of singular value decay. \label{fig: test-3 singular value}}	
\end{figure} 

\subsubsection{Approximation error, solution comparison and speed-up} Results for the convergence study remain similar to the previous test cases and are not repeated again for brevity. Rather, we set $n=5$ and study the approximation error. First, we consider the interpolation regime $t\in [0,1]$, for which we find the following average errors
\begin{equation}
\begin{aligned}
\text{Interpolation regime} \begin{cases}
\text{TS--DMD}:\hspB & E_a(n=5,0,1)=1.7\times 10^{-2},\\
\text{DMD}:\hspB & E_a(n=5,0,1)=14.9\times 10^{-2}.
\end{cases}
\end{aligned}
\end{equation}
Observe that the error from TS--DMD is almost nine times smaller than that from DMD. \Cref{fig: test-3 sol comp} compares the solution at $t=1$. Same as earlier, the DMD solution exhibits strong oscillations and provides a poor approximation. In contrast, the TS--DMD solution is accurate and (almost) oscillation-free. Let us recall that $n=5$ is just $5.5\times 10^{-3}$ percent of the HF dimension $N$. For such a small value of $n$, a relative error of $1.7\%$ can be considered reasonable.
\begin{figure}[ht!]
\centering
\subfloat[Interpolation regime $t=1$: TS--DMD]{\includegraphics[width=2.5in]{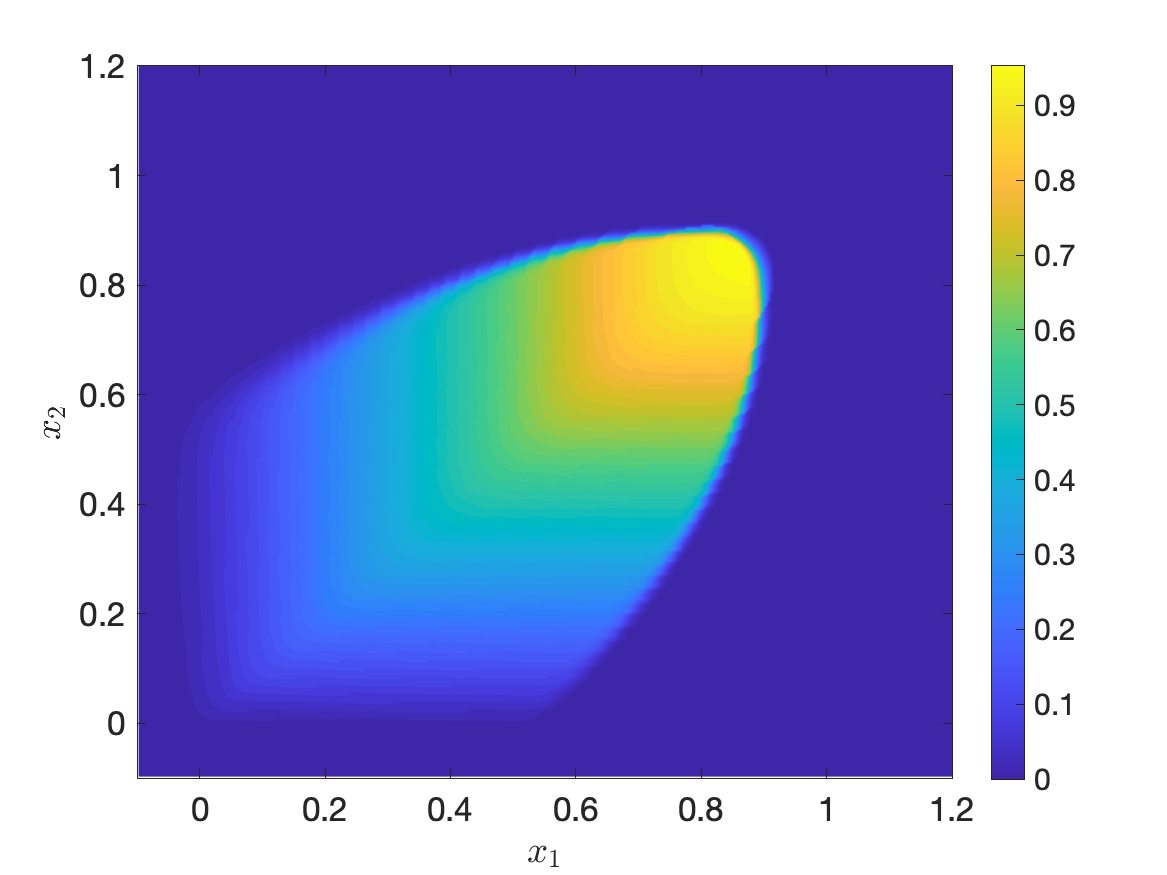}} 
\hfill
\subfloat[Interpolation regime $t=1$: DMD]{\includegraphics[width=2.5in]{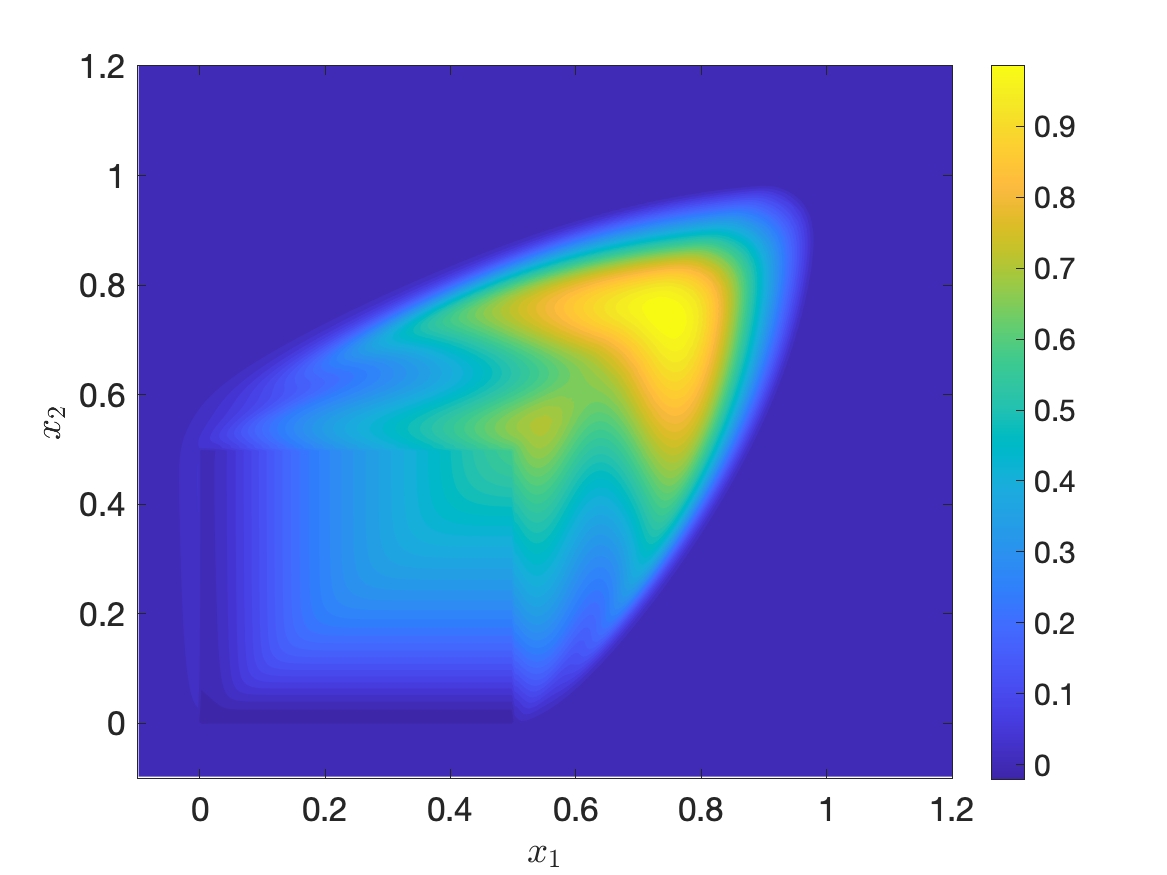}}
\hfill
\subfloat[Interpolation regime $t=1$: HF]{\includegraphics[width=2.5in]{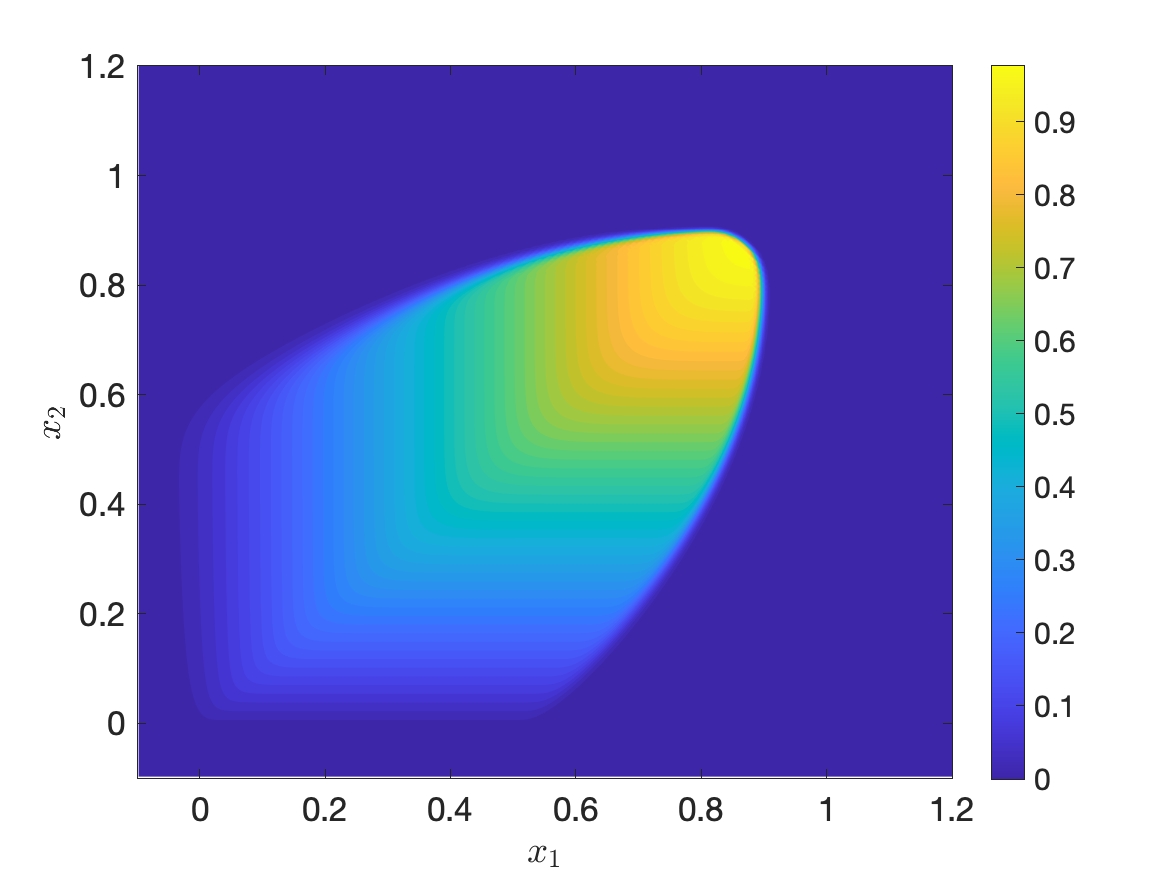}}  
\hfill
\subfloat[Interpolation regime $t=1$: solution overlay]{\includegraphics[width=2.5in]{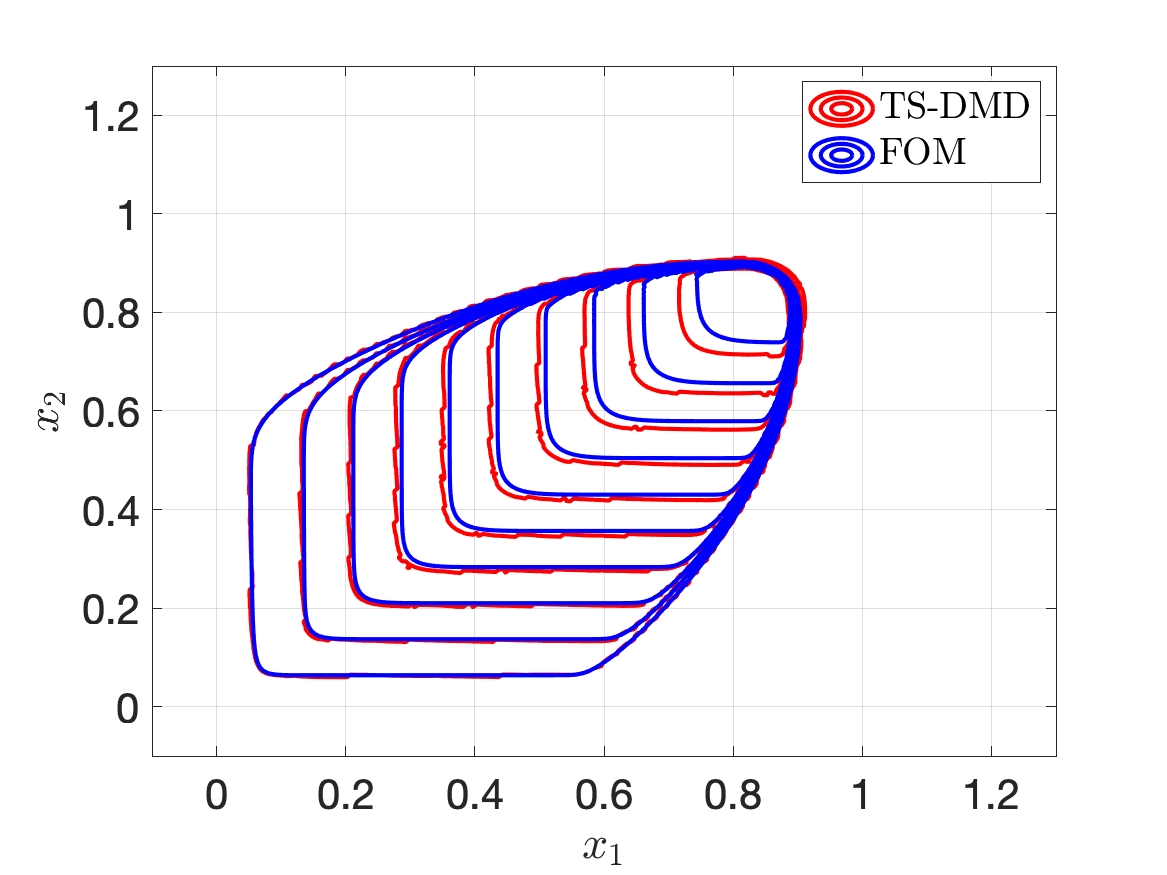}}
\caption{Results for test-3. Solutions for TS--DMD and DMD computed with $n=5$. \label{fig: test-3 sol comp}}	
\end{figure} 

Now consider the extrapolation regime $t\in [1,1.1]$, for which we find the errors 
\begin{equation}
\begin{aligned}
\text{Extrapolation regime} \begin{cases}
\text{TS--DMD}:\hspB & E_a(n=5,1,1.1)=8.0\times 10^{-2},\\
\text{DMD}:\hspB & E_a(n=5,1,1.1)=30.1\times 10^{-2}.
\end{cases}
\end{aligned}
\end{equation}
 Notice that both the methods perform worse than in the interpolation regime. Nevertheless, TS--DMD provides a much better solution with an average that is almost four times smaller than that from DMD. For most practical applications, an error of $30\%$ resulting from DMD is undesirable. The current test case, and the previous ones, indicate that for hyperbolic PDEs, DMD is largely inadequate in the extrapolation regime. Our claim is further corroborated by \Cref{fig: test-3 sol comp extrp} that compares the solutions at $t=1.1$. Compared to the interpolation regime, the DMD solution exhibits stronger oscillations. In contrast, the TS--DMD solution is (almost) oscillation-free and provides an accurate approximation of the HF solution.

Although TS--DMD provides a much better solution than DMD, it has two inadequacies: (i) close to the leading edge, it under predicts the shock location by almost $2\%$, and (ii) it fails to accurately capture the shape of the rarefaction fan---see \Cref{fig: test-3 sol diag}. The linearity of the best-fit evolution operator (i.e., the matrix $\td{\mcal A}$ in \eqref{best-fit}) could be a possible reason behind these inadequacies. Note that the Burgers' equation is non-linear. Consequently, it exhibits a solution-dependent shock location and a rarefaction fan; recall that linear hyperbolic PDEs (with a sufficiently regular velocity field) do not exhibit rarefaction fans and have a solution independent shock-location. The action of the best-fit linear operator may be incapable of accurately predicting these non-linear transport-dominated effects. At least for the Burgers' equation, introducing quadratic nonlinearities via the operator inference (see \cite{Benjamin2016}) approach might improve the results. Further investigation is required to establish anything conclusive.

\begin{figure}[ht!]
\centering
\subfloat[Extrapolation regime $t=1.1$: TS--DMD]{\includegraphics[width=2.5in]{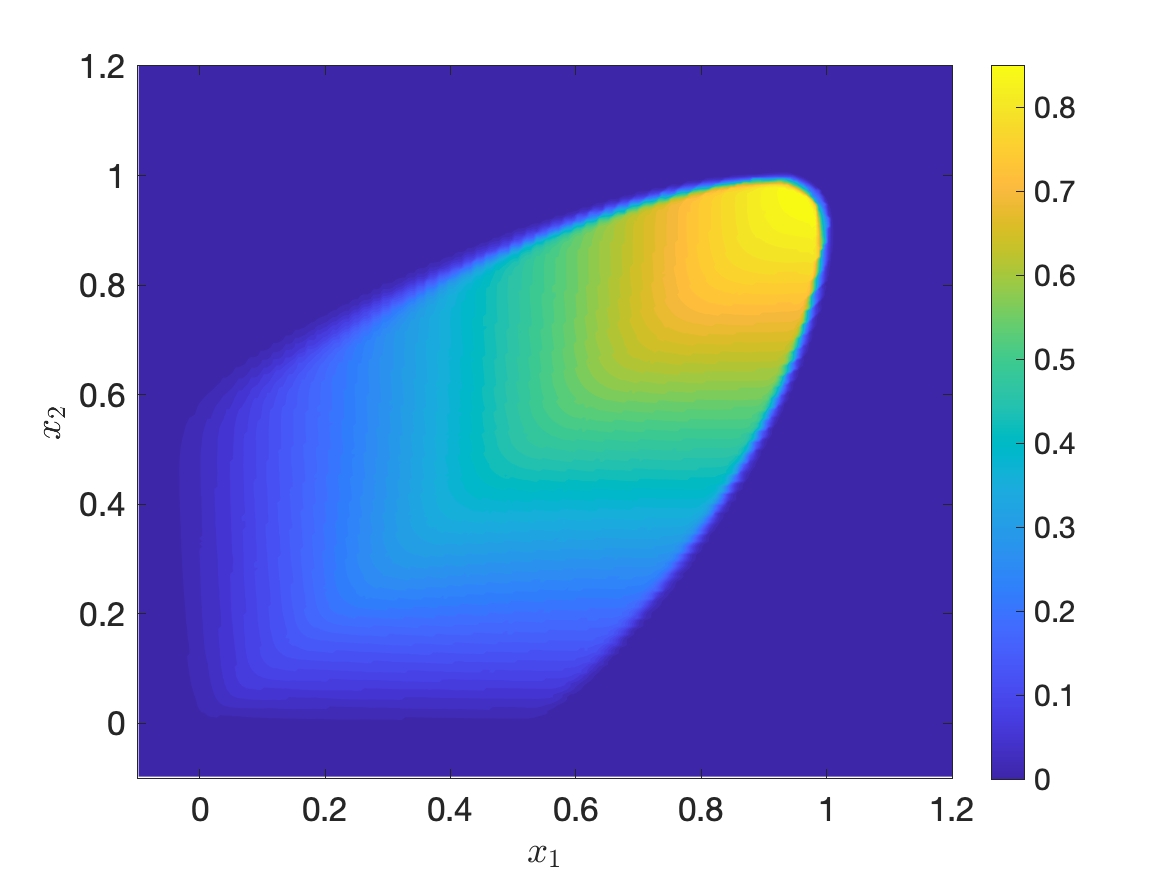}} 
\hfill
\subfloat[Extrapolation regime $t=1.1$: DMD]{\includegraphics[width=2.5in]{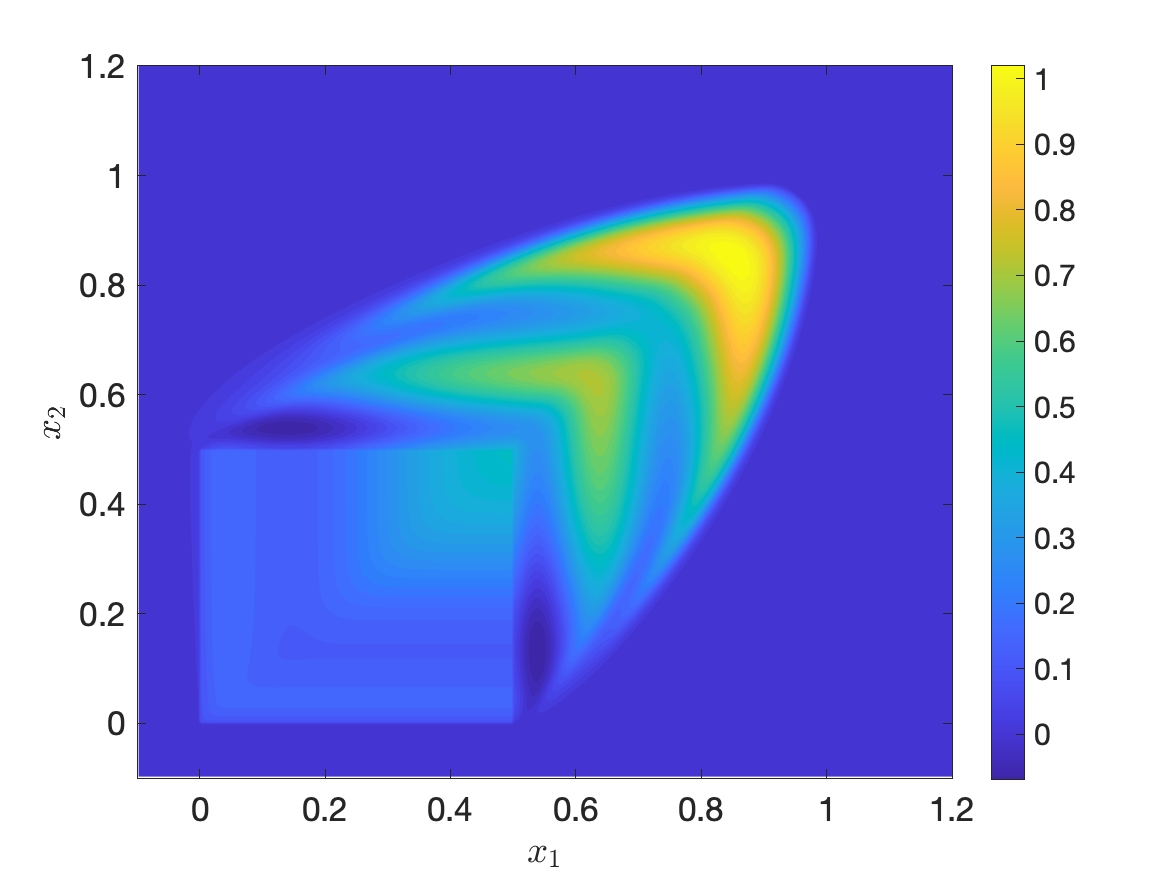}}
\hfill
\subfloat[Extrapolation regime $t=1.1$: HF]{\includegraphics[width=2.5in]{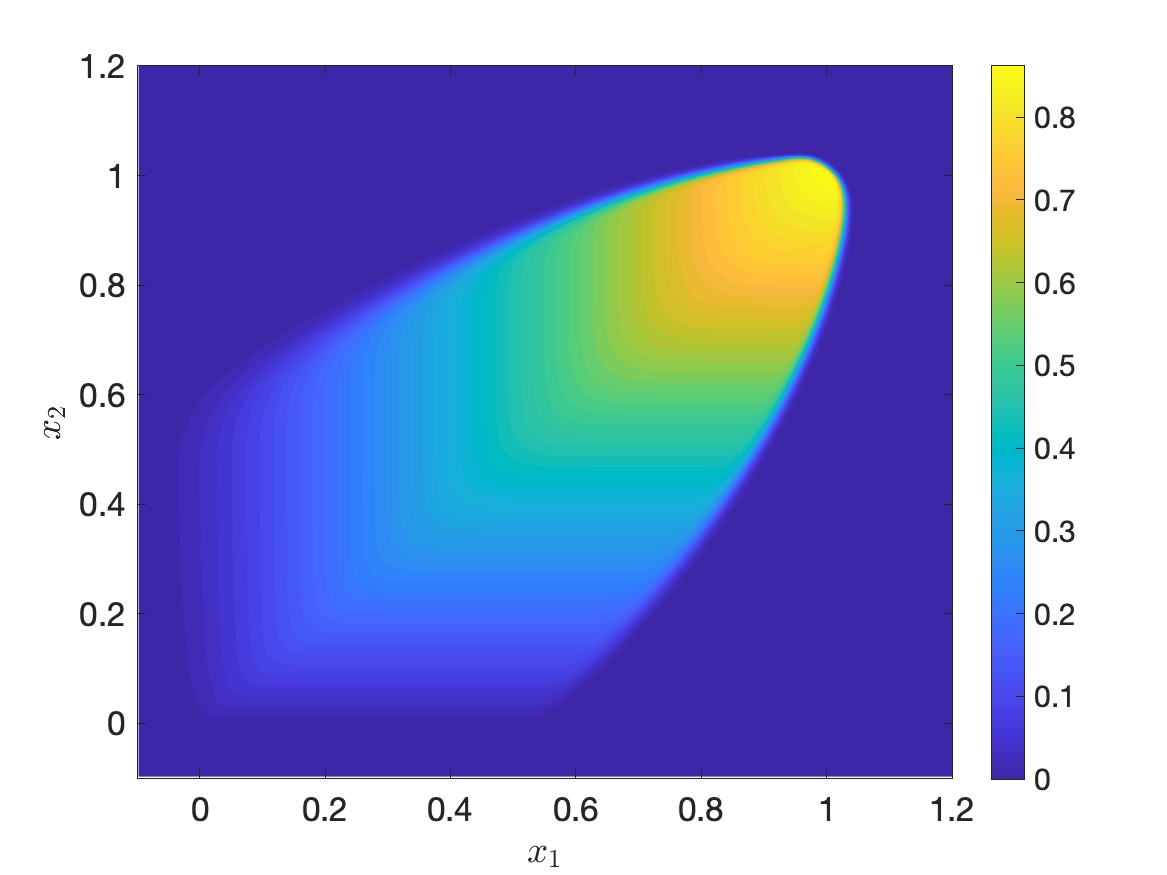}} 
\hfill
\subfloat[Extrapolation regime $t=1.1$: solution overlay]{\includegraphics[width=2.5in]{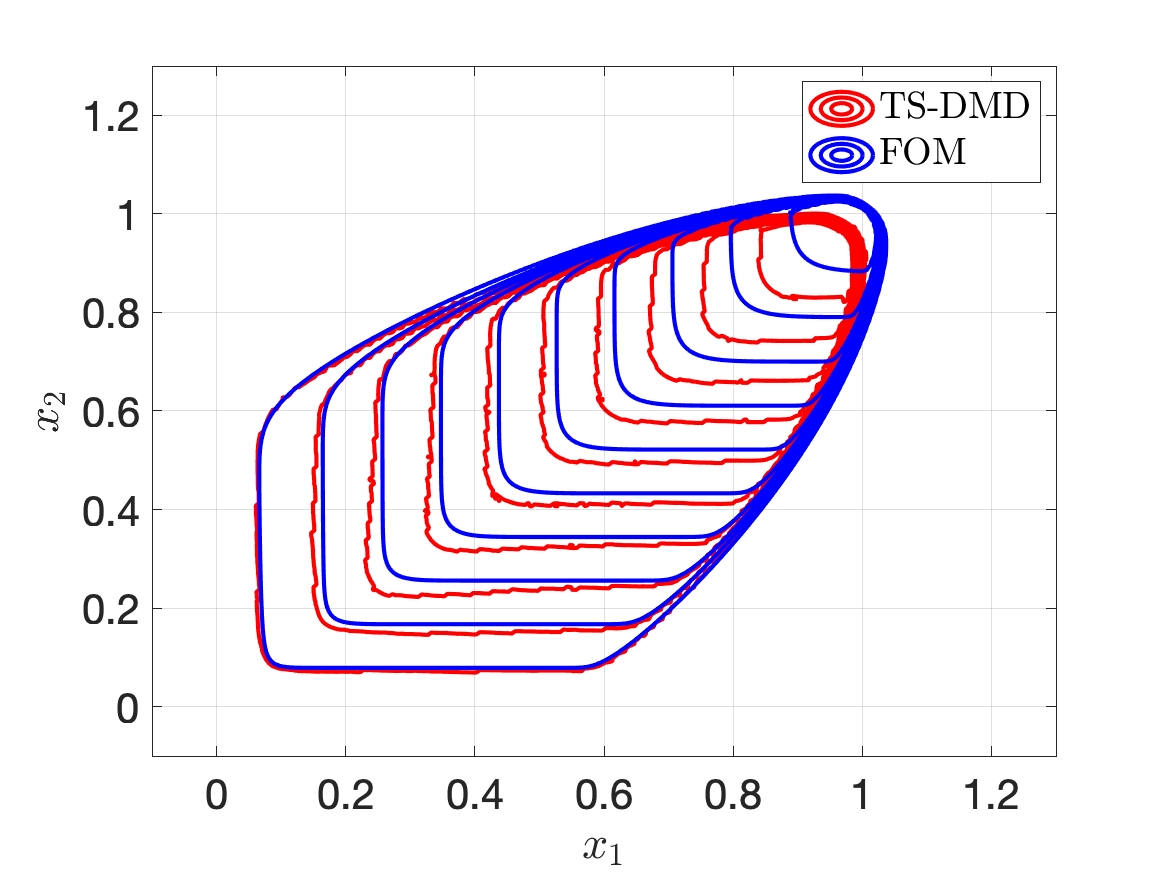}\label{fig: test-3 sol diag}} 
\caption{Results for test-3. Solutions for TS--DMD and DMD computed with $n=5$. \label{fig: test-3 sol comp extrp}}	
\end{figure} 

\Cref{fig: test-3 err time} depicts the temporal behaviour of the error $E(n=5,t)$. TS-DMD provides a much better approximation with error values almost an order of magnitude smaller than in DMD. Similar to the previous test cases, in the extrapolation regime, the error from both the methods increases monotonically with time. Notice that for TS--DMD, the error is particularly large close to $t=0$. This is because at $t=0$, the discontinuity-topology of the solution suddenly changes---see \Cref{sec: snapshots varphi} and Section-6 of \cite{sarna2021datadriven} for further details. As a result, compared to the other time instances, the image registration technique provides a slightly inaccurate spatial transform.

\begin{figure}[ht!]
\centering
\includegraphics[width=2.5in]{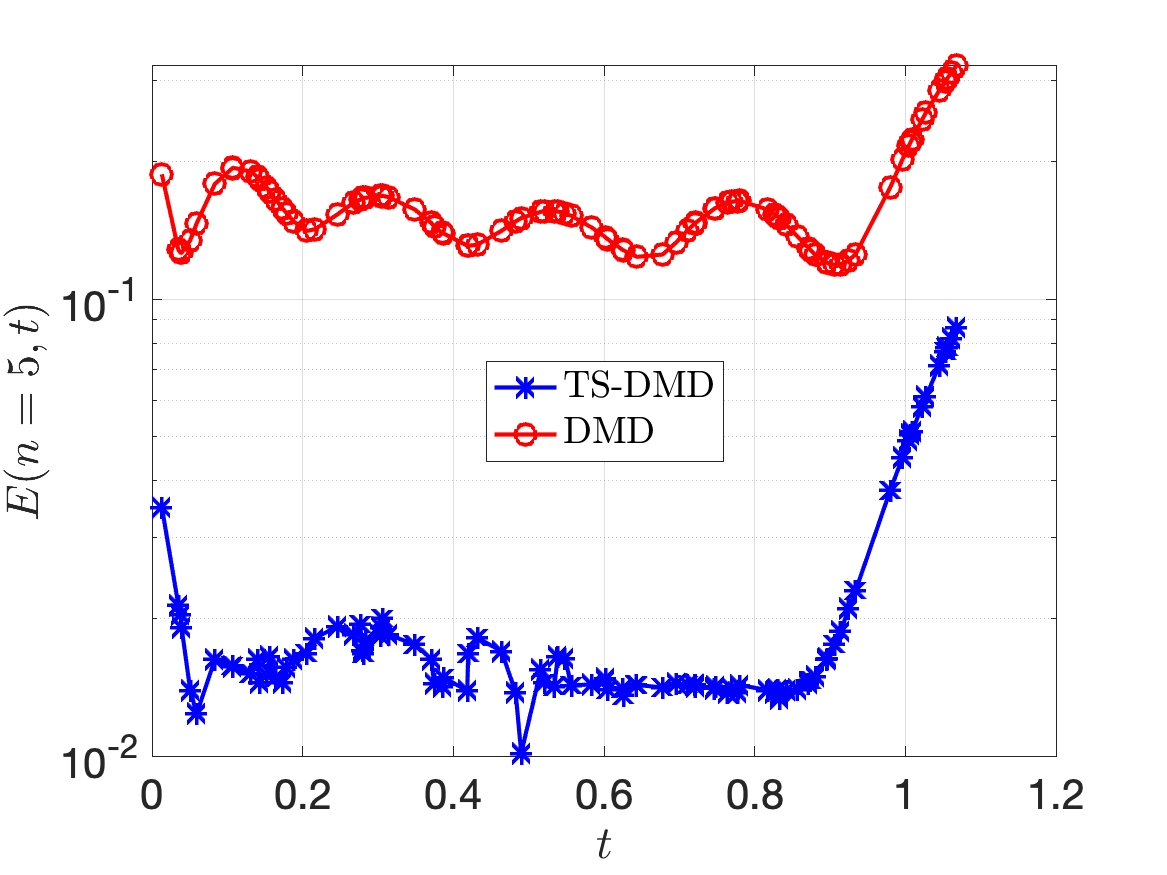}
\caption{Results for test-3. Temporal behaviour of the L1-error defined in \eqref{def E}. Simulations performed with n = 5. \label{fig: test-3 err time}}	
\end{figure} 

\subsubsection{Speed-up vs. accuracy}
Consider the speed-up defined in \eqref{def kappa}. The test set $\parSp_{test}$ consists of $100$ uniformly and independently sampled points from $[0,1.1]$. For $n=5$, TS-DMD results in a speed-up of $1.7\times 10^3$ with an average error of $2.3\%$. 

\section{Conclusions}\label{sec: conclusion}
Solutions to hyperbolic PDEs exhibit temporal discontinuities which hinders their approximability in a linear reduced space. This makes the standard approach of learning a ROM directly for the solution ineffective. We resolve this problem by introducing a de and re-composition step. In the decomposition step, we express the solution as a composition of a transformed solution and a de-transformer. As compared to the solution, both these objects have good approximability in a linear reduced space and allow for efficient learnability via standard ROM learning techniques. We then recover a ROM for the solution via a re-composition.

 Our de and re-composition architecture has mainly three appealing properties. Firstly, it does not require an explicit form of the underlying hyperbolic PDE. Secondly, it is detached from any pre-existing numerical implementation of a ROM learning toolbox. Lastly, de and re-composition can easily be deactivated for problems that are tractable via standard, previously developed, learning techniques. This way, one can avoid the extra cost associated with these two steps.
 
 We used DMD to learn the ROMs and performed numerical experiments involving linear, non-linear, and multi-D (in space) hyperbolic PDEs. Consistent with the previous works, we identified the following drawbacks of a standard DMD approach that learns a ROM directly for the solution. Firstly, both in the interpolation and the extrapolation regime, the ROM is plagued with oscillations. Secondly, in the extrapolation regime, the ROM largely misrepresents the solution and results in an error of fifty to hundred percent. Furthermore, at least empirically, even with a large reduced space, in the extrapolation regime, the ROM does not appear to converge to the true solution. 
 
To a significant extent, our approach overcomes the aforementioned inadequacies. Both inside the interpolation and the extrapolation regime, it results in an error that is at least five to ten times smaller than in the standard approach. Furthermore, owing to the de-transformation step, it results in a ROM that is (almost) oscillation free. However, similar to the standard approach, it results in an error that increases monotonically with time in the extrapolation regime. As a result, accurate extrapolation was only possible for short time intervals.

\section*{Acknowledgements}
\addcontentsline{toc}{section}{Acknowledgments}
N.S  and P.B are supported by the German Federal Ministry for Economic Affairs and Energy (BMWi) in the joint project "MathEnergy - Mathematical Key Technologies for Evolving Energy Grids", sub-project: Model Order Reduction (Grant number: 0324019B).

\appendix
\section{Error bound}\label{app: error bound}
By triangle's inequality, we find 
\begin{equation}
\begin{aligned}
\|\solFV{t}-u_n(\cdot,t)\|_{L^1(\Omega)}\leq &\|g_N(\td\varphi(\cdot,t),t)-g_N(\td{\varphi}_n(\cdot,t),t)\|_{L^1(\Omega)}\\
& + \|g_N(\td{\varphi}_n(\cdot,t),t)-g_n(\td\varphi_n(\cdot,t),t)\|_{L^1(\Omega)}\\
&=: A_1 + A_2.
\end{aligned}
\end{equation}
We first bound $A_1$. Provided our HF solver is total variation diminishing (TVD), we have 
\begin{gather}
\left|\solFV{t}\right|_{BV(\Omega)}\leq \left|\sol{t}\right|_{BV(\Omega)},\label{app: TVD property}
\end{gather}
where $BV(\Omega)$ is a space of functions with bounded variations. Note that the TVD property is an immediate consequence of the CFL condition in Lemma-20.2 of \cite{FVNotes}. Since $\varphi(\cdot,t)$ is a $W^{1,\infty}(\Omega,\mbb R^d)$ homeomorphism, we find 
\begin{gather}
\left|g_N(\cdot,t)\right|_{BV(\Omega)} = \left|u_N(\cdot,t)\right|_{BV(\Omega)}.\label{app: BV with homeo}
\end{gather}
 Then, using the assumptions on $\td\varphi$ and $\td\varphi_n$, Lemma-3.1 of \cite{Welper2017} and the above two relations \eqref{app: TVD property} and \eqref{app: BV with homeo}, we bound $A_1$ via
 \begin{equation}
\begin{aligned}
A_1\leq \left|g_N(\cdot,t)\right|_{BV(\Omega)}\|\td\varphi(\cdot,t)-\td\varphi_n(\cdot,t)\|_{L^{\infty}(\Omega)}
=\left|u_N(\cdot,t)\right|_{BV(\Omega)}\delta \leq \left|u(\cdot,t)\right|_{BV(\Omega)}\delta.
\end{aligned}
\end{equation}
An integral transform provides a bound for $A_2$ that reads
\begin{equation}
\begin{aligned}
A_2 \leq &\|\nabla \td\varphi_n(\cdot,t)\|_{L^{\infty}(\Omega)}\|g_N(\cdot,t)-g_n(\cdot,t)\|_{L^1(\Omega)}\\
\leq &(\|\nabla \td\varphi(\cdot,t)\|_{L^{\infty}(\Omega)} + \delta)\|g_N(\cdot,t)-g_n(\cdot,t)\|_{L^1(\Omega)}. 
\end{aligned}
\end{equation}
\bibliographystyle{abbrv}
\bibliography{../papers}

\end{document}